\newtheorem{thm}{Theorem}
\newtheorem{cor}{Corollary}
\newtheorem{prop}{Proposition}
\newtheorem*{rem}{Remark}
\newtheorem{exam}{Example}
\newtheorem*{defi}{Definition}
\newtheorem{lem}{Lemma}
\begin{document}

    \title{How to Solve the Matrix Equation $XA-AX=f(X)$}
    \author{Gerald BOURGEOIS}

    \address{G\'erald Bourgeois, GAATI, Universit\'e de la polyn\'esie fran\c caise, BP 6570, 98702 FAA'A, Tahiti, Polyn\'esie Fran\c caise.}
    \email{gerald.bourgeois@upf.pf}
        
  \subjclass[2010]{Primary 15A24}
    \keywords{Matrix equation, Lie algebra}

\begin{abstract}
Let $f$ be an analytic function defined on a complex domain $\Omega$ and $A\in\mathcal{M}_n(\mathbb{C})$. We assume that there exists a unique $\alpha$ satisfying $f(\alpha)=0$. When $f'(\alpha)=0$ and $A$ is non-derogatory, we completely solve the equation $XA-AX=f(X)$. This generalizes Burde's results. When $f'(\alpha)\not=0$, we give a method to solve completely the equation $XA-AX=f(X)$: we reduce the problem to solving a sequence of Sylvester equations. Solutions of the equation $f(XA-AX)=X$ are also given in particular cases.
\end{abstract}

\maketitle
    \section{Introduction}
    Usually, the exact solutions of algebraic matrix equations are obtained as limits of approximating solutions. For instance, this method is used to find positive solutions of the equation $X+BX^{-1}B^*=A$, where $A$ is a hermitian matrix (see \cite{4}). Riccati equation $XAX+XB+CX+D=0$ or its hermitian counterpart $XAX+XB+B^*X+C=0$ ($A,C$ hermitian matrices) and the quadratic equation $AX^2+BX+C=0$ are also solved in this way (see \cite{5},\cite{6} and \cite{7}). On the other hand, very few results are known about explicit solutions of algebraic matrix equations. In \cite{9},the authors give general complete parametric forms for the solutions $(X,Y)$ of the generalized Sylvester equation $AX-XF=BY$. In \cite{10}, the author finds exact rational solutions of the equation $p(X)=A$ where $p$ is a polynomial. Finally, the author in \cite{8} found exact solutions of the system $\{A+B+C=\alpha{I_n},A^2+B^2+C^2=\beta{I_n}, A^3+B^3+C^3=\gamma{I_n}\},\alpha,\beta,\gamma\in\mathbb{C},n\in\mathbb{N}_{\geq{2}}$, where the $(n\times{n})$ matrices $A,B,C$ are to be determined.\\    
\indent Let $n\in\mathbb{N}_{\geq{2}}$, $K$ be a field and $A\in\mathcal{M}_n(K)$. In \cite{11}, the equation $XA-AX=\tau(X)$, where $\tau$ is a $K$-automorphism of finite order is studied. In \cite{1}, Burde completely solved the related matrix equation $XA-AX=X^p$, with $p\in\mathbb{N}_{\geq{2}}$, unknown $X\in\mathcal{M}_n(\mathbb{C})$ and $A$ a given $(n\times{n})$ non-derogatory complex matrix. In this article, we propose to extend Burde's results to a more general class of matrix equations.
  \indent We introduce notations that will be used in the sequel of the article.\\
    \textbf{Notation}. $i)$ For any $X\in\mathcal{M}_n(\mathbb{C})$, $\sigma(X)$ denotes the spectrum of $X$.\\
  $ii)$ Denote by $I_n$ the identity matrix of $\mathcal{M}_n(\mathbb{C})$.\\\\    
     Let $\Omega$ be a complex domain and $f:\Omega\rightarrow\mathbb{C}$ be an analytic function.   
    We consider the matrix equation
  \begin{equation}   \label{equation 1}   XA-AX=f(X)   \end{equation}
      where the $(n\times{n})$ complex matrix $A$ is given  and the unknown is a $(n\times{n})$ complex matrix $X$ such that $\sigma(X)\subset{\Omega}$. We assume that there exists a unique $\alpha\in{\Omega}$ such that $f(\alpha)=0$.\\
 \indent When $f'(\alpha)=0$ and $A$ is non-derogatory, we completely solve Equation (\ref{equation 1}). The solution of this problem follows Burde's method.\\ 
\indent When $f'(\alpha)\not=0$ and there exist two eigenvalues of $A$ whose difference is $f'(\alpha)$, we prove that Equation (\ref{equation 1}) admits non trivial solutions. Moreover, we give a method to completely solve Equation (\ref{equation 1}). Indeed we reduce the problem to solving a sequence of Sylvester equations. We apply this to the equation $$XA-AX=\log(X).$$
Now it should be noted that the method used to prove these results differs from that of Burde.\\
\indent We have a look at the equation $f(XA-AX)=X$ when $f$ is locally invertible in a neighborhood of $0$. In particular we show that the equations $XA-AX=\log(X)$ and $e^{XA-AX}=X$ have same solutions.
  Further results on the equation
       $$f(XA-AX)=X$$
       are also given in the case where $f$ is not locally invertible in any neighborhood of $0$.
      
      \section{General remarks}
      Recall the following definitions
     \begin{defi} 
  i) Let $A,B\in\mathcal{M}_n(\mathbb{C})$. The matrices $A,B$ are said \emph{simultaneously triangularizable} if there exists an invertible matrix $P\in\mathcal{M}_n(\mathbb{C})$ such that $P^{-1}AP$ and $P^{-1}BP$ are upper triangular.\\ 
 ii) \cite{2} Let $X\in\mathcal{M}_n(\mathbb{C})$ be such that $\sigma(X)\subset{\Omega}$. We put $$f(X)=\dfrac{1}{2i\pi}\int_{\Gamma}f(z)(zI_n-X)^{-1}dz,$$
    where $\Gamma$ is a counterclockwise oriented closed contour in $\Omega$ that encloses $\sigma(X)$.\\ 
 iii) \cite{2} The matrix $f(X)$ is said to be a \emph{primary matrix function}. 
    \end{defi}
    \begin{rem}
 The matrix $f(X)$ does not depend on the choice of the contour $\Gamma$.  
  \end{rem}
 We have the following well-known result
  \begin{prop} \label{primary} 
 i) The matrix $f(X)$ can be written as a polynomial in $X$ whose coefficients depend on $X$.\\
 ii) We have the equality $\sigma(f(X))=f(\sigma(X))$.
 \end{prop}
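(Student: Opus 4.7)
The plan is to reduce both statements to the Jordan canonical form of $X$. If $X=SJS^{-1}$, then $(zI_n-X)^{-1}=S(zI_n-J)^{-1}S^{-1}$, so the Dunford--Cauchy integral commutes with conjugation: $f(X)=Sf(J)S^{-1}$. Since the property of being expressible as a polynomial is invariant under conjugation and since $\sigma(f(X))=\sigma(f(J))$, it is enough to analyse each Jordan block separately.

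For a single block $J_k(\lambda)=\lambda I_k+N$ with $N$ the standard nilpotent, I would expand the resolvent geometrically as $(zI_k-J_k(\lambda))^{-1}=\sum_{j=0}^{k-1}(z-\lambda)^{-(j+1)}N^{j}$, valid on a punctured neighborhood of $\lambda$. Inserting this into the defining contour integral and applying the residue theorem yields
$$f(J_k(\lambda))=\sum_{j=0}^{k-1}\frac{f^{(j)}(\lambda)}{j!}N^{j},$$
which is visibly a polynomial in $J_k(\lambda)$. To obtain statement (1) globally, I would then invoke Hermite interpolation: given the eigenvalues $\lambda_1,\dots,\lambda_r$ of $X$ together with the sizes of the largest Jordan block at each $\lambda_i$, there is a unique polynomial $P$ of degree less than $\deg\mu_X$ (the minimal polynomial) whose derivatives at each $\lambda_i$ match those of $f$ up to the required order. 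Block-by-block the previous formula shows $P(J)=f(J)$, whence $f(X)=P(X)$; the coefficients of $P$ depend on $f$ and on the spectral data of $X$, as required.

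For statement (2), the matrix $f(J_k(\lambda))$ is upper triangular with the single value $f(\lambda)$ on the diagonal, so its spectrum is $\{f(\lambda)\}$. Taking the union over the Jordan blocks of $X$ gives $\sigma(f(X))=\{f(\lambda):\lambda\in\sigma(X)\}=f(\sigma(X))$.

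The only mildly delicate step is the passage from the block formula to a single polynomial $P$ via Hermite interpolation, but this is a classical construction and presents no genuine obstacle; the remainder is bookkeeping around conjugation by $S$ and the residue computation.
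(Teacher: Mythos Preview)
Your argument is correct and is the standard textbook proof of these facts. The paper, however, does not give its own proof of this proposition: it simply writes ``see~\cite{2}'' and defers to the reference. So there is no approach to compare against; you have supplied precisely the kind of argument one would find in the cited source (Jordan form, resolvent expansion on each block, Cauchy's integral formula for derivatives, and Hermite interpolation to assemble a single polynomial).

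One very minor stylistic point: since the paper treats this proposition as background material quoted from the literature, a full proof here would be out of proportion with the paper's own treatment. If you were writing this up you could simply cite the reference as the authors do, or compress your argument to a one-line sketch.
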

 \begin{proof}
 See~\cite{2}.
 \end{proof}

    \begin{thm} \label{ST}
    Let $A,X\in\mathcal{M}_n(\mathbb{C})$ be such that $XA-AX=f(X)$. The matrices $A$ and $X$ are simultaneously triangularizable and $\sigma(X)\subset{f}^{-1}(0)$.
\end{thm}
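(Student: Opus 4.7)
The plan is to reduce the theorem to two clean facts about $Y:=XA-AX=f(X)$: first, that $Y$ is nilpotent, which combined with Proposition~\ref{primary}(2) will give $\sigma(X)\subset f^{-1}(0)$; second, a common-eigenvector argument that drives an induction on $n$ for simultaneous triangularization.

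For the first step, Proposition~\ref{primary}(1) asserts that $f(X)$ is a polynomial in $X$, so $XY=YX$. Using this together with the cyclicity of the trace,
\[
\operatorname{tr}(Y^k)=\operatorname{tr}(XAY^{k-1})-\operatorname{tr}(AXY^{k-1})=\operatorname{tr}(AY^{k-1}X)-\operatorname{tr}(AXY^{k-1})=0
\]
for every $k\ge 1$, so all eigenvalues of $Y$ vanish; hence $Y=f(X)$ is nilpotent. Proposition~\ref{primary}(2) then gives $\{0\}=\sigma(f(X))=f(\sigma(X))$, and the uniqueness of the zero $\alpha$ forces $\sigma(X)\subset\{\alpha\}=f^{-1}(0)$.

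For simultaneous triangularization, I would induct on $n$. The case $n=1$ is immediate. For $n\ge 2$, the nonempty set $\sigma(X)$ equals $\{\alpha\}$, so $E:=\ker(X-\alpha I_n)\neq\{0\}$. For $v\in E$, the functional calculus yields $f(X)v=f(\alpha)v=0$, and therefore $X(Av)=A(Xv)+f(X)v=\alpha(Av)$; thus $E$ is $A$-invariant. Pick a common eigenvector $v_0\in E$ of $X$ and $A|_E$, then pass to the $(n-1)$-dimensional quotient $\mathbb{C}^n/\mathbb{C}v_0$, on which the induced operators $\bar X,\bar A$ again satisfy $\bar X\bar A-\bar A\bar X=f(\bar X)$. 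Apply the induction hypothesis and extend the resulting triangularizing flag through $v_0$.

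The delicate point is verifying the equation on the quotient: one needs that $f$ commutes with this quotient construction. This works because Proposition~\ref{primary}(1) provides a scalar polynomial $p$ with $f(X)=p(X)$; this same polynomial induces $p(\bar X)$ on the quotient, and since $\sigma(\bar X)\subset\sigma(X)$, the Hermite-interpolation characterization of primary matrix functions forces $p(\bar X)=f(\bar X)$. Everything else is routine.
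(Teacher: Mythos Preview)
Your argument is correct, but it follows a genuinely different path from the paper's. The paper builds the finite-dimensional space $V$ spanned by $A, I_n, X,\dots, X^{n-1}$, checks via the identity $X^iA-AX^i=iX^{i-1}f(X)$ that $V$ is a Lie algebra with $[V,V]\subset\mathbb{C}[X]$, hence solvable, and then invokes Lie's Theorem to obtain simultaneous triangularization in one stroke; nilpotency of $XA-AX$ and the spectrum statement are read off afterwards. You reverse the logic: a trace computation exploiting $[X,f(X)]=0$ gives nilpotency of $Y=f(X)$ first, and then simultaneous triangularization comes from the elementary common-eigenvector / quotient induction, using that the eigenspace $\ker(X-\alpha I_n)$ is $A$-invariant. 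Your route avoids any appeal to Lie theory and is entirely self-contained; the paper's route is shorter, and the Lie-algebra viewpoint is of independent interest (the relation $X^iA-AX^i=iX^{i-1}f(X)$ is reused later in the paper). One small remark: your induction step, as written, uses the standing hypothesis that $\alpha$ is the unique zero of $f$ to assert $\sigma(X)=\{\alpha\}$; if you instead start from any $\lambda\in\sigma(X)\subset f^{-1}(0)$ and set $E=\ker(X-\lambda I_n)$, the same argument goes through without that assumption, matching the generality of the theorem as stated.
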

\begin{proof}
Let $V$ be the vector space spanned by $\{A,I_n,X,\cdots,X^{n-1}\}$. One checks easily by induction that:
\begin{equation}\label{relation rec}  \text{for all}\; i\geq{1},\;\; X^iA-AX^i=iX^{i-1}f(X). \end{equation}
 By Cayley-Hamilton's Theorem and Proposition \ref{primary}, $X^iA-AX^i$ belongs to $V$, and $V$ is a Lie algebra. The derived series of $V$ is 
 $$V_1=[V,V]\subset\mathbb{C}[X], \;V_2=[V_1,V_1]=\{0\}.$$
 Thus $V$ is solvable. According to Lie's Theorem, $V$ is triangularizable, that is $A$ and $X$ are simultaneously triangularizable. Therefore, $XA-AX$ is a nilpotent matrix and $f(\sigma(X))=\{0\}$ (see Proposition \ref{primary}).
\end{proof}
\begin{defi}
 Let $M=[m_{ij}]$ be a strictly upper triangular $(n\times{n})$ matrix. For every $i\in\{1,\cdots,n-1\}$ the set $\{m_{1,i+1},\cdots,m_{n-i,n}\}$ is said to be the \emph{false diagonal of $M$ with index $i$}.
 \end{defi} 
\begin{thm} \label{valpr}
Assume that there exists a unique $\alpha\in{\Omega}$ such that $f(\alpha)=0$. Equation (\ref{equation 1}) admits a solution $X$ such that $X\not=\alpha{I}_n$ if and only if there exist $\lambda,\mu\in\sigma(A)$ such that $\lambda-\mu=f'(\alpha)$.
\end{thm}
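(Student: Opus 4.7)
The plan is to leverage Theorem \ref{ST}. Every solution $X$ satisfies $\sigma(X)\subset f^{-1}(0)=\{\alpha\}$, so $N:=X-\alpha I_n$ is nilpotent, and $A$ and $X$ are simultaneously triangularizable. I work in a basis where $A$ is upper triangular with diagonal $(a_1,\dots,a_n)$ (its eigenvalues listed with algebraic multiplicity) and $N$ is strictly upper triangular.

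For necessity, suppose $N\neq 0$ and let $k\in\{1,\dots,n-1\}$ be the smallest index of a non-zero false diagonal of $N$. The key observation is that both sides of (\ref{equation 1}) linearize on this diagonal. Since the first non-zero false diagonal of $N^j$ has index at least $jk$, the Taylor expansion
\[
f(\alpha I_n+N)=f'(\alpha)N+\sum_{j\ge 2}\tfrac{f^{(j)}(\alpha)}{j!}N^{j}
\]
contributes only $f'(\alpha)N_{i,i+k}$ on the $k$-th false diagonal, while a direct computation using that $A$ is upper triangular gives $(NA-AN)_{i,i+k}=(a_{i+k}-a_i)N_{i,i+k}$. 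Equating, $(a_{i+k}-a_i-f'(\alpha))N_{i,i+k}=0$ for every $i$; by minimality of $k$ some $N_{i,i+k}$ is non-zero, so $\mu:=a_i$ and $\lambda:=a_{i+k}$ are eigenvalues of $A$ satisfying $\lambda-\mu=f'(\alpha)$.

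For sufficiency, given $\lambda,\mu\in\sigma(A)$ with $\lambda-\mu=f'(\alpha)$, the plan is to exhibit a rank-one solution of the form $X=\alpha I_n+vw^T$. Pick a column vector $v$ with $Av=\mu v$ and a column vector $w$ with $A^Tw=\lambda w$, set $N=vw^T$, and compute
\[
NA-AN=v(w^TA)-(Av)w^T=(\lambda-\mu)vw^T=f'(\alpha)N.
\]
If moreover $w^Tv=0$, then $N^2=0$, so $f(X)=f(\alpha)I_n+f'(\alpha)N=f'(\alpha)N$ and $X$ solves (\ref{equation 1}) non-trivially. When $\lambda\neq\mu$, the relation $\lambda w^Tv=(A^Tw)^Tv=w^TAv=\mu w^Tv$ yields $w^Tv=0$ automatically. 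When $\lambda=\mu$ (hence $f'(\alpha)=0$), the common eigenvalue has algebraic multiplicity at least two, and one extracts the required $v,w$ with $w^Tv=0$ either by orthogonalizing inside a two-dimensional eigenspace of $A$, or, in the case of a single Jordan block of size at least two, by taking the first and last Jordan vectors of that block.

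The main obstacle is the $\lambda=\mu$ situation in the sufficiency direction, where biorthogonality is not automatic and a careful appeal to the Jordan structure of $A$ is needed; the necessity and the case $\lambda\neq\mu$ of sufficiency reduce to bookkeeping on false diagonals together with the Taylor expansion of $f$ at $\alpha$.
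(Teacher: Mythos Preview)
Your proof is correct and follows the paper's approach closely. The necessity argument---simultaneous triangularization via Theorem~\ref{ST}, then reading off the minimal nonzero false diagonal of $N$ and comparing with the Taylor expansion of $f$ at $\alpha$---is identical to the paper's. For sufficiency, the paper simply puts $A$ in upper triangular form with $a_{11}=\mu$, $a_{nn}=\lambda$ and takes $N$ to be the matrix with a single $1$ in the $(1,n)$ entry; your rank-one construction $N=vw^T$ with $Av=\mu v$, $A^Tw=\lambda w$ is the coordinate-free version of exactly this, but the paper's formulation handles all cases uniformly and avoids your separate $\lambda=\mu$ analysis. One small imprecision in that analysis: $w$ lives in the $\mu$-eigenspace of $A^T$, not of $A$, so ``orthogonalizing inside a two-dimensional eigenspace of $A$'' should really read ``choosing a nonzero $v$ in the kernel of the functional $v\mapsto w^Tv$ on the (at least two-dimensional) $\mu$-eigenspace of $A$''.
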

\begin{proof}
Let $X\in\mathcal{M}_n(\mathbb{C})$ be a solution of Equation (\ref{equation 1}). According to Theorem \ref{ST}, $\sigma(X)\subset{f}^{-1}(0)=\{\alpha\}$. Thus $X=\alpha{I}_n+N$ where $N$ is a nilpotent matrix. Replacing $X$ by $\alpha{I}_n+N$ in Equation (\ref{equation 1}), one obtains the following equivalences
\begin{eqnarray}
X\text{ is solution of Equation (\ref{equation 1})} &\Leftrightarrow& NA-AN=f(\alpha{I}_n+N) \nonumber\\
&\Leftrightarrow&   NA-AN=f'(\alpha)N+\cdots+\dfrac{f^{(n-1)}(\alpha)}{(n-1)!}N^{n-1}.\label{taylor}
\end{eqnarray}
Now we prove that Equation (\ref{taylor}) admits a non-zero nilpotent solution $N$ if and only if there exist $\lambda,\mu\in\sigma(A)$ such that $\lambda-\mu=f'(\alpha)$.\\
($\Leftarrow$). We may assume that $A=[a_{ij}]$ is a upper triangular matrix such that $a_{11}=\mu$ and $a_{nn}=\lambda$.
We consider the non-zero nilpotent matrix $N=\begin{pmatrix}
&1\\
\boxed{\begin{matrix}&\\&&0&&\\&\end{matrix}}
&
\end{pmatrix}$.
Since $N^2=0$, one has 
$$NA-AN=(\lambda-\mu)N=f(\alpha{I}_n+N).$$
($\Rightarrow$). We may assume that $A$ and $N$ are upper triangular matrices such that $N\not=0$ and, for every $i$, $a_{ii}=\lambda_i$. Suppose that
\begin{equation} \label{ineg} \text{for all}\; i\not=j,\;\; \lambda_i-\lambda_j\not=f'(\alpha).  \end{equation}
 By considering the non-zero false diagonal of $N$ with minimal index $k$, Equation (3) gives the following relations:
 $$\begin{array}{ccl}
 n_{1,1+k}(\lambda_{k+1}-\lambda_1-f'(\alpha))&=&0\\
 \vdots&\vdots&\vdots\\
 n_{n-k,n}(\lambda_{n}-\lambda_{n-k}-f'(\alpha))&=&0.
 \end{array}$$
 According to Inequality (\ref{ineg}), the false diagonal of index $k$ is zero. That is a contradiction.
\end{proof}
\section{The case $f'(\alpha)=0$}
Now, we assume that there exists a unique $\alpha\in{\Omega}$ such that $f(\alpha)=0$ and $f'(\alpha)=0$. Then we look for the non-zero nilpotent solutions of the equation:\\
\begin{equation}  XA-AX=X^pg(X)  \label{xg} \end{equation}
 where $p\in\{2,\cdots,n-1\}$ and $g$ is a polynomial depending on $f$ only, such that $g(0)\not=0$ and $\deg(g)<n-p$.
 Relation (\ref{relation rec}) can be rewritten as 
 \begin{equation} \label{xgrecu} \text{for all}\;\;i\geq{1},\;\; X^iA-AX^i=iX^{i+p-1}g(X).  \end{equation}
 \begin{rem}
 i) According to Theorem \ref{valpr}, necessarily $A$ has multiple eigenvalues.\\ 
 ii) The case $p=n$ reduces to finding the nilpotent matrices that commute with $A$.
  \end{rem}
 \begin{defi}
 Let $h(X)=\sum_{i=0}^ka_iX^i$ be a non-zero polynomial. The valuation of $h$ is
   $val(h)=\min\{i\;|\;a_i\not=0\}$.
   \end{defi}
   \begin{lem} \label{lem asxl}
 Let $s,l\in\mathbb{N}^*$. The following equality holds:
 \begin{equation} \label{asxl}  A^sX^l=\sum_{j=0}^{s}h_j(X)A^{s-j} \end{equation}
 where, for every $0\leq{j}\leq{s},\;h_j$  is a polynomial such that $val(h_j)\geq{l}+j(p-1)$.  
 \end{lem}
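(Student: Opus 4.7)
The plan is to prove the lemma by induction on $s$, using the commutation relation from (\ref{xgrecu}) to move powers of $A$ past powers of $X$ one at a time, then tracking valuations carefully.

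First I would rewrite Relation (\ref{xgrecu}) in the form
\[
A X^{i} = X^{i} A - i X^{i+p-1} g(X) \qquad (i\geq 0),
\]
which is the basic commutation rule I need. Since this is linear in $X^i$, it extends to polynomials: for any polynomial $h\in\mathbb{C}[X]$,
\[
A\, h(X) = h(X)\, A - X^{p} h'(X)\, g(X).
\]
(Writing $h=\sum_k c_k X^k$, the correction term is $\sum_k k c_k X^{k+p-1}g(X) = X^{p-1}\cdot X h'(X)\cdot g(X) = X^p h'(X) g(X)$.) This is the workhorse identity.

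The base case $s=1$ of the induction is just the commutation rule itself: take $h_0(X)=X^l$, $h_1(X)=-l X^{l+p-1}g(X)$, and note that since $g(0)\neq 0$, $\mathrm{val}(h_1)=l+p-1=l+1\cdot(p-1)$.

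For the inductive step, assume $A^s X^l = \sum_{j=0}^s h_j(X) A^{s-j}$ with $\mathrm{val}(h_j)\geq l+j(p-1)$. Multiply on the left by $A$ and apply the workhorse identity to each $A\, h_j(X)$:
\[
A^{s+1} X^l = \sum_{j=0}^{s}\bigl(h_j(X) A - X^p h_j'(X) g(X)\bigr) A^{s-j}.
\]
Reindexing, the new coefficients $\tilde h_j$ are $\tilde h_0 = h_0$, $\tilde h_j = h_j - X^p h_{j-1}' g$ for $1\leq j\leq s$, and $\tilde h_{s+1} = -X^p h_s' g$. The main thing to check is the valuation bound on $X^p h_{j-1}'(X) g(X)$: differentiation lowers valuation by at most $1$, multiplication by $X^p$ raises it by $p$, and $g$ contributes valuation $0$, so
\[
\mathrm{val}(X^p h_{j-1}' g) \geq p + (l+(j-1)(p-1)-1) + 0 = l + j(p-1),
\]
which matches the required bound for $\tilde h_j$. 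The case $j=s+1$ is identical.

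There is no real obstacle: once the commutation identity $A\,h(X) = h(X)A - X^p h'(X)g(X)$ is on the table, the rest is bookkeeping. The only minor care point is the valuation of $h_{j-1}'$, where one uses that differentiating a polynomial of valuation $\geq v\geq 1$ gives valuation $\geq v-1$, and this drop is exactly compensated by the factor $X^p$ versus the required step $p-1$.
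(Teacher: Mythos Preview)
Your proof is correct and follows the same inductive strategy as the paper. Your derivative identity $A\,h(X)=h(X)A-X^{p}h'(X)g(X)$ simply packages the paper's monomial-by-monomial commutation into a single clean formula, so the bookkeeping in the inductive step is more transparent but not conceptually different.
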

 \begin{proof}
  The proof is by induction on $s$. The case $s=1$ is clear because, according to Relation (\ref{xgrecu}), $AX^l=X^lA-lX^{l+p-1}g(X)$. We assume that Relation (\ref{asxl}) is true. Then we have 
 $$A^{s+1}X^l=\sum_{j=0}^{s}Ah_j(X)A^{s-j}=\sum_{j=0}^{s}AX^{l+j(p-1)}\phi(X)A^{s-j},$$
  where $\phi$ is a polynomial. For every $l,j\in\mathbb{N},j\leq{s}$, thanks to Relation (\ref{xgrecu}), the following equality holds:
 $$AX^{l+j(p-1)}\phi(X)A^{s-j}=[X^{l+j(p-1)}A-(l+j(p-1))X^{l+(j+1)(p-1)}g(X)]\phi(X)A^{s-j}.$$
 Thus it remains to consider $X^{l+j(p-1)}A\phi(X)A^{s-j}$, the first part of the RHS of the previous expression  or, by linearity, an expression in the form $X^{l+j(p-1)}AX^qA^{s-j}$
 where $q\in\mathbb{N}$. Again thanks to Relation (\ref{xgrecu}),
 \begin{eqnarray*}
 X^{l+j(p-1)}AX^qA^{s-j}&=&X^{l+j(p-1)}(X^qA-qX^{q+p-1}g(X))A^{s-j}\\
 &=& X^{l+j(p-1)+q}A^{(s+1)-j}-qX^{l+(j+1)(p-1)+q}g(X)A^{(s+1)-(j+1)}.
 \end{eqnarray*}
 One deduces that for every $0\leq{j}\leq{s}$
 $$AX^{l+j(p-1)}\phi(X)A^{s-j}=\sum_{t=0}^{s+1}g_t(X)A^{s+1-t},$$
 where each $g_t$ is a polynomial satisfying $val(g_t)\geq{l}+t(p-1)$.
  \end{proof}
 \begin{lem} \label{amv}
 Assume that the matrices $A$ and $X$ satisfy Equation (\ref{xg}) and that $X$ is a nilpotent matrix. Let $v\in\mathbb{C}^n$ and $m\in\mathbb{N}^*$ such that $A^mv=0$, $r,k$ be integers such that $r\geq{n}$ and $1\leq{k}<\dfrac{r}{p-1}$. For every $t\in\mathbb{N}$ such that $t\geq{r}-k(p-1)$, one has $A^{m+k-1}X^tv=0$. 
 \end{lem}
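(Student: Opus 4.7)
The plan is to reduce the statement to Lemma~\ref{lem asxl}. I would apply that lemma with $s=m+k-1$ and $l=t$, writing
$$A^{m+k-1}X^{t}v=\sum_{j=0}^{m+k-1}h_j(X)A^{m+k-1-j}v,$$
where each $h_j$ is a polynomial with $\operatorname{val}(h_j)\geq t+j(p-1)$. The idea is then to split the sum into two ranges of $j$ and kill each range for a different reason.

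First I would handle the small-$j$ range: for $0\leq j\leq k-1$, the exponent satisfies $m+k-1-j\geq m$, so $A^{m+k-1-j}v=A^{k-1-j}(A^mv)=0$ and these terms vanish by hypothesis on $v$.

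Next I would handle the range $j\geq k$. Here the valuation estimate from Lemma~\ref{lem asxl} gives $\operatorname{val}(h_j)\geq t+j(p-1)\geq t+k(p-1)$, which is $\geq r$ by the assumption $t\geq r-k(p-1)$. Since $X$ is nilpotent of size $n$ and $r\geq n$, we have $X^{r}=0$, so $h_j(X)=0$ for all such $j$. Adding the two ranges yields $A^{m+k-1}X^{t}v=0$. I should also remark that the hypothesis $1\leq k<r/(p-1)$ ensures $r-k(p-1)\geq 1$, so that $t\geq 1$ and Lemma~\ref{lem asxl} (stated for $l\in\mathbb{N}^{*}$) genuinely applies.

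There is no real obstacle here: the argument is a bookkeeping exercise on the expansion provided by Lemma~\ref{lem asxl}, and the only point requiring care is to check that the thresholds fit together, namely that the condition $t\geq r-k(p-1)$ with $r\geq n$ is precisely what forces $\operatorname{val}(h_j)\geq n$ for every index $j\geq k$ that survives the first cancellation.
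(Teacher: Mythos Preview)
Your argument is correct. Both you and the paper rely on Lemma~\ref{lem asxl}, but the routes differ. The paper argues by induction on $k$: it first treats $k=1$ by observing that $X^{t}$ and $A$ commute (since $t+p-1\geq r\geq n$), and in the inductive step it peels off one factor of $A$ via $A^{m+k-1}X^{t}=A^{m+k-2}(X^{t}A-tX^{t+p-1}g(X))$, then invokes Lemma~\ref{lem asxl} with exponent $m+k-2$ on the first piece and the induction hypothesis on the second. You instead apply Lemma~\ref{lem asxl} once with the full exponent $s=m+k-1$ and split the resulting sum at $j=k$; this bypasses the induction entirely and is the cleaner argument. Your check that $t\geq 1$ (so that Lemma~\ref{lem asxl}, stated for $l\in\mathbb{N}^{*}$, applies) is a nice point the paper leaves implicit.
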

 \begin{proof}
 The proof is by induction on $k$. Let $t\geq{r-p+1}$. By Relation (\ref{xgrecu}), $AX^t-X^tA=-tX^{t+p-1}g(X)=0$. Thus, $A$ and $X^t$ commute and $A^mX^tv=X^tA^mv=0$. This proves the case $k=1$. \\
 We assume that Lemma \ref{amv} is true for $k-1$. Let $t\geq{r}-k(p-1)$. By Relation (\ref{xgrecu}), 
 \begin{equation} \label{noyau} A^{m+k-1}X^tv=A^{m+k-2}X^tAv-tA^{m+k-2}X^{t+p-1}g(X)v.\end{equation}
    Thanks to the induction hypothesis and to the inequality $t+p-1\geq{r}-(k-1)(p-1)$, one has
    $$A^{m+k-2}X^{t+p-1}g(X)v=0.$$
 By Lemma \ref{lem asxl}, we can write 
 $$A^{m+k-2}X^tAv=\sum_{j=0}^{m+k-2}h_j(X)A^{m+k-j-1}v,$$
  with for every $j\in\llbracket{0},m+k-2\rrbracket$, $val(h_j)\geq{t}+j(p-1)$.\\
  For every $j\geq{k}$ one has 
  $$val(h_j)\geq{t}+j(p-1)\geq{t}+k(p-1)\geq{r}\geq{n}.$$
   Hence, for all $j\geq{k}$, one has $h_j(X)=X^n\phi(X)=0$ where $\phi$ is a polynomial. Obviously, if $j<k$ then one has $m+k-j-1\geq{m}$ and $A^{m+k-j-1}v=0$. According to Relation (\ref{noyau}), we are done.
   \end{proof} 
 \begin{thm} \label{general}
 Assume that the matrix $X$ satisfies Equation (\ref{xg}) and that $X$ is a nilpotent matrix. Then the generalized eigenspaces of $A$ are $X$-invariant.
 \end{thm}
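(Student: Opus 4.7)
The plan is to apply Lemma \ref{amv} with $t=1$, after first using the translation invariance of Equation (\ref{xg}) to reduce the problem to showing that $\ker(A^n)$ itself is $X$-invariant. Fix an eigenvalue $\lambda \in \sigma(A)$ and write $E_\lambda = \ker((A-\lambda I)^n)$ for the generalized eigenspace. The key observation is that Equation (\ref{xg}) is unchanged when $A$ is replaced by $A-\lambda I$, since
\[
X(A - \lambda I) - (A - \lambda I)X = XA - AX = X^p g(X),
\]
and $X$ is obviously still nilpotent. It is therefore enough to prove the statement in the case $\lambda = 0$: whenever $(A,X)$ satisfies (\ref{xg}) with $X$ nilpotent, one has $X\ker(A^n) \subseteq \ker(A^n)$.

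Pick $v \in \ker(A^n)$ and apply Lemma \ref{amv} with $m = n$ and $t = 1$. The hypothesis $t \geq r - k(p-1)$ becomes $k(p-1) \geq r - 1$, while $k < r/(p-1)$ forces $k(p-1) \leq r - 1$ (as $k$ is an integer). These two inequalities together force the equality $r = k(p-1) + 1$, so I am free to take $k = \lceil (n-1)/(p-1) \rceil$, which gives $r = k(p-1)+1 \geq n$ and $k \geq 1$; all admissibility conditions of the lemma are met. The lemma then delivers $A^{n+k-1}Xv = 0$, hence $Xv \in \ker(A^{n+k-1}) = \ker(A^n)$, because generalized eigenspaces of an $n\times n$ matrix stabilize no later than the $n$-th power. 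This is exactly the $X$-invariance of $E_\lambda$, and since $\lambda$ was arbitrary, the theorem follows.

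\textbf{Main difficulty.} The substantive content has already been packaged into Lemma \ref{amv}, so no new matrix manipulation is required here. The only genuinely delicate point is arithmetic: one must realize that the joint constraints $r\geq n$, $1\leq k < r/(p-1)$, $t\geq r-k(p-1)$ with $t=1$ pin $r$ to be exactly $k(p-1)+1$, and then check that such a pair $(r,k)$ can indeed be chosen with $r\geq n$. Once this is recognized, the translation trick and the stabilization of kernels at the $n$-th power make the remainder of the argument immediate.
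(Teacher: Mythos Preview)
Your proof is correct and follows essentially the same route as the paper: reduce to $\lambda=0$ by the translation $A\mapsto A-\lambda I$, then invoke Lemma~\ref{amv} with $t=1$ and $r=k(p-1)+1$ for a suitable $k$, and conclude via kernel stabilization. The only cosmetic differences are that the paper works with an unspecified $m$ satisfying $A^m v=0$ rather than fixing $m=n$, and leaves the choice of $k$ implicit (``let $k\in\mathbb{N}^*$ such that $r=1+k(p-1)\geq n$'') where you compute $k=\lceil (n-1)/(p-1)\rceil$ and spell out the arithmetic forcing $r=k(p-1)+1$.
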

 \begin{proof}
 Let $\lambda$ be an eigenvalue of $A$ and let $H_\lambda=ker(A-\lambda{I}_n)^n$ be the generalized eigenspace associated to $\lambda$. We may assume $\lambda=0$. For every $v\in{H}_0$, there exists an integer $m$ such that $A^mv=0$. Let $k\in\mathbb{N}^*$ such that $r=1+k(p-1)\geq{n}$. Applying Lemma \ref{amv}, with $k=\dfrac{r-1}{p-1}$ and $t=1$, we obtain $A^{m+k-1}Xv=0$, that is $Xv\in{H}_{0}$. 
 \end{proof}
 The generalized eigenspaces of $A$ span $\mathbb{C}^n$ and thus we can deduce easily the following result
 \begin{cor} \label{corgene}
 Let $\sigma(A)=\{\lambda_1,\cdots,\lambda_k\}$ and $P$ be an invertible matrix such that $$P^{-1}AP=\bigoplus_{i=1}^k(\lambda_iI_{\alpha_i}+N_i),$$
  where $\sum_{i=1}^k\alpha_i=n$ and, for all $i$, $N_i$ is an $(\alpha_i\times\alpha_i)$ nilpotent matrix.\\
\indent If $X$ is a nilpotent matrix solution of Equation (\ref{xg}), then $P^{-1}XP=\bigoplus_{i=1}^k\limits{X_i}$, where for every $i$, $X_i$ is a nilpotent matrix that satisfies $X_iN_i-N_iX_i=X_i^pg(X_i)$.  
 \end{cor}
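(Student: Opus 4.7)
The plan is to read off the corollary directly from Theorem \ref{general}, using only two ingredients: the $X$-invariance of each generalized eigenspace and the fact that the scalar parts $\lambda_i I_{\alpha_i}$ commute with anything.

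First I would observe that $\mathbb{C}^n=\bigoplus_{i=1}^k H_{\lambda_i}$, and the columns of $P$ corresponding to the $i$-th block form a basis of $H_{\lambda_i}$. By Theorem \ref{general}, each $H_{\lambda_i}$ is $X$-invariant, so in this basis $X$ is block-diagonal: $P^{-1}XP=\bigoplus_{i=1}^k X_i$ with $X_i\in\mathcal{M}_{\alpha_i}(\mathbb{C})$. Since $X$ is nilpotent and conjugation preserves nilpotency, $\bigoplus_i X_i$ is nilpotent, which forces each diagonal block $X_i$ to be nilpotent.

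Next I would conjugate Equation (\ref{xg}) by $P$. Using that $P^{-1}$ applied to a polynomial expression $X^pg(X)$ yields the corresponding expression in $P^{-1}XP$, I get
$$\Bigl(\bigoplus_i X_i\Bigr)\Bigl(\bigoplus_i(\lambda_i I_{\alpha_i}+N_i)\Bigr)-\Bigl(\bigoplus_i(\lambda_i I_{\alpha_i}+N_i)\Bigr)\Bigl(\bigoplus_i X_i\Bigr)=\Bigl(\bigoplus_i X_i\Bigr)^p g\Bigl(\bigoplus_i X_i\Bigr).$$
Both sides are block-diagonal with the $i$-th block being $X_i(\lambda_i I_{\alpha_i}+N_i)-(\lambda_i I_{\alpha_i}+N_i)X_i$ on the left and $X_i^pg(X_i)$ on the right. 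The scalar terms $\lambda_i I_{\alpha_i}$ commute with $X_i$ and cancel, leaving exactly $X_iN_i-N_iX_i=X_i^pg(X_i)$ for each $i$.

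There is no real obstacle here: the content is entirely in Theorem \ref{general}. The only thing to be mildly careful about is the elementary verification that a nilpotent block-diagonal matrix has nilpotent blocks, and that polynomial evaluation respects the direct-sum decomposition; both are immediate from the definitions.
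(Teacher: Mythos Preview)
Your proposal is correct and matches the paper's approach exactly: the paper simply remarks that the generalized eigenspaces of $A$ span $\mathbb{C}^n$ and that the corollary then follows easily from Theorem~\ref{general}. You have merely written out the details the paper leaves implicit.
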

\section{The case $f'(\alpha)=0$ and $A$ non-derogatory}
\begin{defi}
A complex square matrix $M$ is said to be \emph{non-derogatory} if, in its Jordan normal form, for all $\lambda\in\sigma(M)$, the number of Jordan blocks associated with $\lambda$ is $1$.
\end{defi}
We assume $A$ is non-derogatory and we consider Equation (\ref{xg}). According to Corollary \ref{corgene}, it is enough to solve the equation:
\begin{equation} \label{jordan} XJ_n-J_nX=X^pg(X),  \end{equation}
 where $J_n$ is the nilpotent Jordan block of dimension $n$ and $X$ is a unknown nilpotent matrix.
\subsection{The case where $A$ is a Jordan block}
\begin{lem} \label{lemjordan}
Let $X$ be a nilpotent matrix that is solution of Equation (\ref{jordan}) and let $(e_i)_{1\leq{i}\leq{n}}$ be the canonical basis of $\mathbb{C}^n$. Then $Xe_1=0$.
\end{lem}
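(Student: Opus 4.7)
The strategy is to exploit the recursion \eqref{xgrecu}, namely $X^iJ_n-J_nX^i=iX^{i+p-1}g(X)$, applied to the vector $e_1$. Since $J_n$ is the nilpotent Jordan block, $J_ne_1=0$, so evaluating that identity at $e_1$ yields the clean vector-level recursion
\[ J_n(X^ie_1)=-i\,X^{i+p-1}g(X)e_1\qquad(i\geq 1). \]
This recursion is the engine of the whole argument: the left-hand side is $J_n$ applied to $X^ie_1$, which controls how far $X^ie_1$ can be from $\ker J_n=\mathbb{C}e_1$, while the right-hand side is a power of $X$ applied to something, which will be killed as soon as we are in the range where nilpotence of $X$ takes effect.

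I would argue by contradiction. Introduce $i_*:=\min\{i\geq 1\mid X^ie_1=0\}$; such an $i_*$ exists and satisfies $i_*\leq n$ because $X$ is nilpotent. Assume $i_*\geq 2$ and apply the recursion above with $i=i_*-1$. Because $p\geq 2$ we have $i_*+p-2\geq i_*$, and since $g(X)$ is a polynomial in $X$ it commutes with every power of $X$; hence $X^{i_*+p-2}g(X)e_1=g(X)X^{i_*+p-2}e_1=0$. Thus $J_n(X^{i_*-1}e_1)=0$, so $X^{i_*-1}e_1\in\ker J_n=\mathbb{C}e_1$. By minimality of $i_*$, this vector is a \emph{nonzero} scalar multiple $\lambda e_1$. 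Applying $X$ one more time gives $0=X^{i_*}e_1=\lambda\,Xe_1$, which forces $Xe_1=0$; this contradicts $i_*\geq 2$. Hence $i_*=1$, i.e.\ $Xe_1=0$.

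The whole proof rests on the fortunate simplification $J_ne_1=0$ that converts the operator identity \eqref{xgrecu} into a genuine recursion on vectors; the rest is simply bookkeeping with the minimal-vanishing index $i_*$. The only delicate point is the choice of index $i=i_*-1$: it must be small enough that $X^{i_*-1}e_1\neq 0$, yet large enough that $i+p-1\geq i_*$ so that the right-hand side is killed by the definition of $i_*$. The hypothesis $p\geq 2$ is precisely what makes this window nonempty, and this is where the proof would break down if $p$ were allowed to equal $1$, which is consistent with the fact that the case $f'(\alpha)\neq 0$ is treated separately elsewhere in the paper.
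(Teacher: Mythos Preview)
Your proof is correct and follows essentially the same approach as the paper: both evaluate the identity \eqref{xgrecu} at $e_1$, use $J_ne_1=0$ to obtain $J_n(X^ie_1)=-iX^{i+p-1}g(X)e_1$, and then exploit $\ker J_n=\mathbb{C}e_1$ together with the nilpotence of $X$. The paper packages this as a descent step ($X^le_1=0\Rightarrow X^{l-(p-1)}e_1=0$ for $l\geq p$, iterated from $l=n$ down to $l=1$), whereas you phrase it as a contradiction on the minimal vanishing index $i_*$; the underlying mechanism is identical.
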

\begin{proof}
$\bullet$ We show that for $l\ge{p}$, $X^le_1=0$ implies $X^{l-(p-1)}e_1=0$. Indeed, one has
 $$J_nX^{l-(p-1)}-X^{l-(p-1)}J_n=-(l-(p-1))X^lg(X).$$ 
 Thus we deduce that $J_nX^{l-(p-1)}e_1=0$. Therefore, there exists $\lambda\in\mathbb{C}$ such that $X^{l-(p-1)}e_1=\lambda{e}_1$. Since $X$ is nilpotent,  $\;\lambda=0$.\\
$\bullet$ Obviously, $X^ne_1=0$. By repeating the previous argument, there exists $k\leq{p}$ such that $X^ke_1=0$. Thus $X^pe_1=0$ and $Xe_1=0$.
\end{proof}
\begin{lem} \label{lemtriang}
Every nilpotent solution of Equation (\ref{jordan}) is strictly upper triangular.
\end{lem}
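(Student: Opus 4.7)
The plan is to prove, by induction on $i\in\{1,\ldots,n\}$, that $Xe_i \in V_{i-1}$, where $V_j := \mathrm{span}(e_1,\ldots,e_j)$ and $V_0 = \{0\}$. The base case $i=1$ is exactly Lemma \ref{lemjordan}.

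For the inductive step, I would assume $Xe_j \in V_{j-1}$ for every $j<i$. Then $X$ and $J_n$ both leave $V_{i-1}$ invariant, so they descend to operators $\bar X$, $\bar J_n$ on the quotient $W = \mathbb{C}^n/V_{i-1}$, of dimension $n-i+1$. In the natural basis $(\bar e_i,\ldots,\bar e_n)$ of $W$, the operator $\bar J_n$ is exactly the nilpotent Jordan block $J_{n-i+1}$, and the defining relation passes to the quotient as
\[
\bar X\,\bar J_n - \bar J_n\,\bar X = \bar X^p g(\bar X),
\]
with $\bar X$ still nilpotent. Applying Lemma \ref{lemjordan} inside $W$ yields $\bar X\,\bar e_i = 0$, i.e.\ $Xe_i \in V_{i-1}$, completing the induction.

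The one point that requires a moment's verification is that Lemma \ref{lemjordan} genuinely transfers verbatim to the quotient, even though $p$ is unchanged while the dimension drops. Inspecting its proof, the argument only invokes nilpotency of the matrix together with Relation (\ref{xgrecu}), and neither depends on the ambient dimension. In the extreme case $p \ge \dim W$, the right-hand side of the quotient equation vanishes for dimension reasons, so $\bar X$ commutes with $\bar J_n$ and is therefore a nilpotent polynomial in $\bar J_n$, which still sends $\bar e_i$ to $0$. Either way, the inductive step goes through and the lemma follows.
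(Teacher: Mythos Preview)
Your proof is correct and follows essentially the same approach as the paper: the paper inducts on the dimension $n$, peeling off the first column via Lemma~\ref{lemjordan} and observing that the lower-right $(n-1)\times(n-1)$ block $X_1$ satisfies $X_1J_{n-1}-J_{n-1}X_1=X_1^p g(X_1)$, which is exactly your passage to the quotient $\mathbb{C}^n/V_1$ written in matrix form. Your extra paragraph on the case $p\ge\dim W$ is a point the paper leaves implicit, but as you note, the proof of Lemma~\ref{lemjordan} is insensitive to that constraint.
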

\begin{proof}
The proof is by induction on $n$. The result is obvious for $n=1$. Let $X$ be a nilpotent solution of Equation (\ref{jordan}). According to Lemma \ref{lemjordan}, $X=\begin{pmatrix}0&*\\0&X_1\end{pmatrix}$  where $X_1$ is a $(n-1\times{n}-1)$ matrix. Clearly, $X_1$ is nilpotent and satisfies 
$$X_1J_{n-1}-J_{n-1}X_1=X_1^pg(X_1).$$
 Thus, $X_1$ is strictly upper triangular by induction hypothesis.
\end{proof}
\begin{thm} \label{thmalgo}
We can obtain all nilpotent solutions $X=[x_{ij}]$ of Equation (\ref{jordan}) in the following way. Let $k$ be the coefficient of degree two of the polynomial $X^pg(X)$. We choose arbitrarily the last column of $X$ with $x_{n-1,n}$ satisfying:
\begin{equation}  \label{cond} \text{for all }i\in\{1,\cdots,n-2\},\;\;1-ikx_{n-1,n}\not=0.  \end{equation}
The other coefficients of the strictly upper triangular matrix $X$ are obtained recursively by solving equations of degree one.
\end{thm}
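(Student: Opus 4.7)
The plan is to turn Equation (\ref{jordan}) into an entry-wise system and work super-diagonal by super-diagonal. By Lemma \ref{lemtriang} every nilpotent solution $X=[x_{ij}]$ is strictly upper triangular, and a direct computation with $J_n=\sum_l E_{l,l+1}$ gives $(XJ_n-J_nX)_{ij}=x_{i,j-1}-x_{i+1,j}$, so (\ref{jordan}) is equivalent to
\begin{equation*}
x_{i,j-1}-x_{i+1,j}=(X^p g(X))_{ij},\qquad 1\le i<j\le n.
\end{equation*}
Along the super-diagonal of index $a=j-i-1$ this yields $n-a-1$ equations in the $n-a$ unknowns $\{x_{i,i+a}\}_{i=1}^{n-a}$, so exactly one entry per super-diagonal is free, naturally taken to be the bottom-right entry $x_{n-a,n}$, i.e.\ the corresponding entry of the last column of $X$.

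The second step is to track which entries of $X$ appear in $(X^pg(X))_{i,i+a+1}$. Strict upper triangularity forces any nontrivial product of $p$ factors of $X$ on super-diagonal $a+1$ to split into $p$ factors whose super-diagonal indices are positive and sum to $a+1$, each therefore at most $a-p+2$. For $p\ge 3$ all factors live on super-diagonals $\le a-1$; the recurrence for super-diagonal $a$ is then first-order linear with coefficient $1$, any last column is admissible, and $k=0$ makes (\ref{cond}) vacuous. For $p=2$ the term $g(0)X^2$ contributes $g(0)(x_{i,i+1}x_{i+1,i+a+1}+x_{i,i+a}x_{i+a,i+a+1})$, reintroducing super-diagonal $a$ on the right. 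Super-diagonal $1$ must therefore be solved first and nonlinearly, while super-diagonals $a\ge 2$ reduce to linear equations once super-diagonal $1$ is known.

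The heart of the proof is the $p=2$, $a=1$ equation
\begin{equation*}
x_{i,i+1}-x_{i+1,i+2}=g(0)\,x_{i,i+1}x_{i+1,i+2}.
\end{equation*}
Setting $z_i=1/x_{i,i+1}$ linearizes it to $z_i=z_{i+1}-g(0)$, whose closed form gives
\begin{equation*}
x_{i,i+1}=\frac{x_{n-1,n}}{1-(n-1-i)k\,x_{n-1,n}}\qquad(k=g(0)),
\end{equation*}
which is well-defined for $i=1,\dots,n-2$ precisely under hypothesis (\ref{cond}); the degenerate case $x_{n-1,n}=0$ simply yields the zero super-diagonal. Conversely, if $1-jkx_{n-1,n}=0$ for some $j$, the original relation forces the inconsistency $0=1/g(0)$, so the hypothesis is necessary as well as sufficient. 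Once super-diagonal $1$ is in hand, the recurrence on each super-diagonal $a\ge 2$ is linear in $x_{i,i+a}$ with coefficient $1-g(0)x_{i+a,i+a+1}$, and the closed form for super-diagonal $1$ shows that this coefficient equals $(1-(n-i-a)k\,x_{n-1,n})/(1-(n-1-i-a)k\,x_{n-1,n})$, which stays nonzero under (\ref{cond}). The main obstacle is exactly this $p=2$, $a=1$ nonlinear step: one must justify the reciprocal substitution, absorb the $x_{n-1,n}=0$ case, and then verify that the closed form keeps every later linear coefficient away from zero so that the degree-one recurrences on the higher super-diagonals are uniquely solvable.
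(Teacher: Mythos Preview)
Your proof is correct and follows essentially the same approach as the paper: both work entry-wise, super-diagonal by super-diagonal, solve the first super-diagonal recurrence $x_{i-1,i}(1-kx_{i,i+1})=x_{i,i+1}$ explicitly from the free entry $x_{n-1,n}$, and then propagate linearly to the higher super-diagonals. Your version is in fact more careful than the paper's: you separate the cases $p\ge 3$ (where $k=0$ and the recurrences have leading coefficient $1$) and $p=2$, you linearize the first super-diagonal via the reciprocal substitution, and---most notably---you explicitly verify that the leading coefficient $1-g(0)x_{i+a,i+a+1}$ of each later recurrence equals $(1-(n-i-a)kx_{n-1,n})/(1-(n-1-i-a)kx_{n-1,n})$ and hence stays nonzero under (\ref{cond}), a point the paper leaves implicit when it says ``we reason in the same way.''
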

\begin{proof}
$\bullet$ The computation of the false diagonal of index $2$ in Equation (\ref{jordan}) gives:
 $$\text{for all }2\leq{i}\leq{n-1},\;\; x_{i-1,i}(1-kx_{i,i+1})=x_{i,i+1}.$$
 If there exists an $i\in\llbracket{2},n-1\rrbracket$ such that $1-kx_{i,i+1}=0$  then Equation (\ref{jordan}) admits only the zero solution. Otherwise, one has for all $i\in\llbracket{2},n-1\rrbracket$,
 $$x_{i-1,i}=\dfrac{x_{i,i+1}}{1-kx_{i,i+1}}.$$
 Hence if Condition (\ref{cond}) is satisfied, then for all $i\in\llbracket{2},n-1\rrbracket$, one has:
 $$x_{i-1,i}=\dfrac{x_{n-1,n}}{1-(n-i)kx_{n-1,n}}.$$ 
  $\bullet$ We reason in the same way for the false diagonals of index $3,4,\cdots,n-2$. Finally, for the last false diagonal, $x_{1,n}$ can be arbitrarily chosen.
\end{proof}
\begin{rem} 
 i) Once the last column of $X$ is chosen, the matrix $X$ is uniquely determined.\\
 ii) The obtained matrix $X$ is similar to $J_n$ if and only if $x_{n-1,n}\not=0$. 
 \end{rem}
 \subsection{Numerical computations}
 The performance of computations was measured by using  a 2 GHz Intel Core Duo processor provided with 2 GB RAM.
One checks easily that the complexity of the calculations is $O(n^2)$.
On the other hand, the determination of the coefficients of $X$ as a function of the last column, is much more complicated.
For instance, for $n=10$, we consider the equation 
$$XJ_{10}-J_{10}X=-7X^2+3X^3-X^4+X^5-2X^6-X^7+3X^8-5X^9$$
 where $X$ is a nilpotent matrix.\\
$First\; case$. We look for the matrix $X=[x_{ij}]$ as a function of the $(x_{i,10})_{1\leq{i}\leq{9}}$. Using Maple, we obtain for every $1\leq{i}<j\leq{9}$, $x_{ij}=\dfrac{P_{ij}(x_{1,10},\cdots,x_{9,10})}{Q_{ij}(x_{9,10})}$, where $P_{ij},Q_{ij}$ are polynomials. The duration of calculation is $12$ seconds but the display requires more than $2000$ Maple lines.\\
$Second\; case$. For every $1\leq{i}\leq{9}$, $x_{i,10}$ is randomly chosen as integer in $\llbracket{-}10,10\rrbracket$. We obtain the exact values of the entries of $X$, as rational fractions, in $0.6$ second.
\subsection{$A$ is non-derogatory}
Thanks to Corollary \ref{corgene} and Theorem \ref{thmalgo}, we obtain easily the following result concerning a matrix $A$ such that $\sigma(A)=\{\lambda_1,\cdots,\lambda_k\}$.
\begin{thm} \label{derog}
Assume that $A=[a_{ij}]$ is non-derogatory. All the nilpotent solutions of Equation (\ref{xg}) can be explicitly determined. Moreover a nilpotent solution $X$ of Equation (\ref{xg}) has coefficients in  $\mathbb{Q}((a_{ij})_{ij},(\lambda_i)_{1\leq{i}\leq{k}},(u_{1,\alpha_i},\cdots,u_{\alpha_i-1,\alpha_i})_{1\leq{i}\leq{k}})$ where the $(u_{1,\alpha_i},\cdots,u_{\alpha_i-1,\alpha_i})_{1\leq{i}\leq{k}}$ are chosen as last columns during the calculations of  Theorem 4. In particular, the general solution $X$ depends on $n-k$ parameters.
\end{thm}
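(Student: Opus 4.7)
The plan is to chain together the structure theorems already proved. Since $A$ is non derogatory, for each distinct eigenvalue $\lambda_i$ the generalized eigenspace $H_{\lambda_i}=\ker(A-\lambda_i I_n)^n$ has dimension $\alpha_i$ and $A$ restricted to $H_{\lambda_i}$ has a single Jordan block. The spectral projectors onto the $H_{\lambda_i}$ are polynomials in $A$ whose coefficients are rational functions of the $\lambda_i$ (Lagrange interpolation on the characteristic polynomial), so the change of basis $P$ that realizes the decomposition of Corollary \ref{corgene} can be chosen with entries in $\mathbb{Q}((a_{ij}),(\lambda_i))$, and each $N_i$ is then a single nilpotent Jordan block of size $\alpha_i$ with entries in the same field.

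Next, for each index $i$, I would conjugate $N_i$ to the standard Jordan block $J_{\alpha_i}$ by an invertible $Q_i$; because $N_i$ is already a single Jordan block, $Q_i$ can be constructed by the standard procedure of picking a cyclic vector (any vector outside $\ker N_i^{\alpha_i-1}$ works) and taking the basis $(v,N_iv,\ldots,N_i^{\alpha_i-1}v)$ in reverse, which only requires rational operations in the entries of $N_i$, so $Q_i$ also has entries in $\mathbb{Q}((a_{ij}),(\lambda_i))$. By Corollary \ref{corgene}, the search for nilpotent $X$ is then equivalent to finding, independently for each $i$, a nilpotent matrix $Y_i$ of size $\alpha_i$ satisfying $Y_iJ_{\alpha_i}-J_{\alpha_i}Y_i=Y_i^p g(Y_i)$, i.e.\ exactly Equation (\ref{jordan}) in dimension $\alpha_i$.

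Now I would invoke Theorem \ref{thmalgo} block by block. For the $i$-th block, Theorem \ref{thmalgo} says that one chooses the last column freely (subject to finitely many open conditions of type (\ref{cond})), which amounts to picking the $\alpha_i-1$ parameters $u_{1,\alpha_i},\ldots,u_{\alpha_i-1,\alpha_i}$ (the diagonal entry is forced to $0$ by nilpotency), and every other entry is then obtained by solving linear equations of degree one with coefficients already in $\mathbb{Q}((a_{ij}),(\lambda_i),(u_{1,\alpha_i},\ldots,u_{\alpha_i-1,\alpha_i}))$. Conjugating back by $Q_i$ and assembling the $Y_i$'s into a block-diagonal matrix, then conjugating back by $P$, keeps us in the same field, so $X$ has its entries in $\mathbb{Q}((a_{ij})_{ij},(\lambda_i)_i,(u_{j,\alpha_i})_{j,i})$.

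Finally, for the parameter count, each block contributes $\alpha_i-1$ free parameters, so the total is $\sum_{i=1}^k(\alpha_i-1)=n-k$. The only genuine subtlety I expect to have to verify carefully is the field-tracking step: one must check that the Jordan-basis change $Q_i$ for each $N_i$ can indeed be realized rationally over $\mathbb{Q}((a_{ij}),(\lambda_i))$, rather than requiring further algebraic extensions; this follows because $N_i$ is already presented as a single nilpotent block whose powers $N_i^j$ are computable directly in that field, so no root extraction is needed.
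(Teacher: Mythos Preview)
Your proposal is correct and follows exactly the route the paper intends: the paper's proof is a one-line invocation of Corollary~\ref{corgene} and Theorem~\ref{thmalgo}, and you have simply spelled out the chaining (block decomposition along generalized eigenspaces, reduction to a single Jordan block per eigenvalue via non-derogatoriness, application of Theorem~\ref{thmalgo} blockwise, and the parameter count $\sum_i(\alpha_i-1)=n-k$). Your additional care with the field-tracking for $P$ and the Jordan-basis changes $Q_i$ is more detailed than what the paper provides, but it is in the same spirit and fills a gap the paper leaves implicit.
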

\section{The case $f'(\alpha)\not=0$}
 Now we assume that there exists a unique $\alpha\in{\Omega}$ such that $f(\alpha)=0$ and that $f'(\alpha)\not=0$. We study the non-zero nilpotent solutions of the equation
\begin{equation} \label{nonzero} XA-AX=Xg(X),  \end{equation}
 where $g$ is a polynomial such that $\deg(g)<n-1$ and $g(0)\not=0$. Moreover, $g$ depends only on the function $f$. We may assume $g(0)=1$. Relation (\ref{relation rec}) can be rewritten in the following manner: 
 \begin{equation}  \label{rec}  \text{for all  }i\geq{1},\;\; X^iA-AX^i=iX^ig(X).  \end{equation}
  \begin{rem}
 For $g=1$ we obtain a particular case of the Sylvester equation (see~\cite{2})
 \begin{equation} \label{sylv}  \phi{(}X)=XB-CX=D   \end{equation}
  where $B\in\mathcal{M}_{q}(\mathbb{C})$, $C\in\mathcal{M}_{p}(\mathbb{C})$ and $D\in\mathcal{M}_{pq}(\mathbb{C})$ are given and $X\in\mathcal{M}_{pq}(\mathbb{C})$ is to be determined. Note that\\
 \begin{equation}  \label{sylves}   \sigma(\phi)=\{\lambda-\mu|\lambda\in\sigma(B),\mu\in\sigma(C)\}.  \end{equation} 
 \end{rem}
 \begin{lem}   \label{invar}
 Let $k\in\mathbb{N}^*$. Then $ker(X^k)$ is $A$-invariant.
 \end{lem}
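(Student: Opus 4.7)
The plan is to exploit relation (\ref{rec}) directly. Taking $i=k$ in (\ref{rec}) gives the identity
\[
X^{k}A = A X^{k} + k\, X^{k} g(X).
\]
Since $g(X)$ is a polynomial in $X$, it commutes with $X^{k}$, so I can rewrite the right-hand side as $A X^{k} + k\, g(X)\, X^{k}$.

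Now pick any $v \in \ker(X^{k})$ and apply both sides to $v$. The first term $A X^{k} v$ vanishes because $X^{k}v = 0$, and the second term $k\, g(X) X^{k} v$ vanishes for the same reason. Thus $X^{k}(Av) = 0$, which is exactly the statement $Av \in \ker(X^{k})$.

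There is no real obstacle here: the proof is a one-line consequence of (\ref{rec}) together with the elementary fact that every primary matrix function of $X$ (in particular the polynomial $g(X)$) commutes with every power of $X$. In particular, the assumption $g(0)\neq 0$ (or the normalization $g(0)=1$) plays no role in this lemma; only the presence of the factor $X^{i}$ on the right-hand side of (\ref{rec}) is used, which guarantees that the commutator obstruction lives inside the image of $X^{k}$ and is therefore annihilated by $X^{k}$ upon commuting through $g(X)$.
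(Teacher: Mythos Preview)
Your proof is correct and follows essentially the same idea as the paper's: both apply relation~(\ref{rec}) together with the fact that $g(X)$ commutes with powers of $X$ to conclude $X^kAu=0$. In fact your argument is slightly more streamlined, since you invoke~(\ref{rec}) once with $i=k$, whereas the paper first multiplies~(\ref{nonzero}) by $X^{k-1}$ to get $X^kAu=X^{k-1}AXu$ and then uses~(\ref{rec}) (with $i=k-1$, multiplied on the right by $X$) to kill the latter term.
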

 \begin{proof}
 Let $u\in{k}er(X^k)$. The equality 
 $$X^kA-X^{k-1}AX=X^kg(X)$$
  implies that $X^kAu=X^{k-1}AXu$.
   With Equality (\ref{rec}), we obtain $X^{k-1}AXu=0$. 
 \end{proof}
 \subsection{Decomposition of the solutions}
 Let $A\in\mathcal{M}_n(\mathbb{C})$ and $X$ be a nilpotent solution of Equation (\ref{nonzero}). Let $\lambda\in\sigma(A),u\in\mathbb{C}^n$ and $p\in\llbracket{1},n\rrbracket$ such that $(A-\lambda{I}_n)^pu=0$. 
 \begin{lem} \label{lemdecomp}
 Let $s$ be a positive integer. One has $$i)\;\;Xu\in\bigoplus_{i=1}^{n-1}ker(A+(i-\lambda)I_n)^p.$$
 $$ii)\;\;X^su\in\bigoplus_{i\geq{s}}ker(A+(i-\lambda)I_n)^p.$$ 
 \end{lem}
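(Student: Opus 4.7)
The plan is to build a filtered $A$-invariant subspace containing every $X^s u$, in which each shifted operator $A-(\lambda-s)I_n$ behaves, modulo the next filtration step, like a shift along a Jordan chain for $u$; since $X^n=0$, the filtration eventually vanishes, which forces the claimed annihilations.

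Concretely, I set $u_0=u$, $u_i=(A-\lambda I_n)^i u$ for $1\le i\le p-1$, and $u_p=0$, so that $A u_i=\lambda u_i+u_{i+1}$. Let $U=\sum_{i=0}^{p-1}\mathbb{C}[X]u_i$, which is $X$-stable by construction, and put $F_s U=\sum_{i=0}^{p-1}X^s\mathbb{C}[X]u_i$ for $s\ge 0$. Since $X$ is nilpotent we have $F_n U=\{0\}$. The key identity comes from Equation (\ref{rec}): summing it against monomials gives $A\,q(X)v=q(X)Av-X q'(X)g(X)v$ for every polynomial $q$ and every vector $v$. Writing $g(X)=1+X\tilde g(X)$ (possible because $g(0)=1$) and applying this identity with $q(X)=X^s\psi(X)$ and $v=u_i$ yields after a short calculation
\[
(A-(\lambda-s)I_n)\,X^s\psi(X)u_i \;=\; X^s\psi(X)u_{i+1}\;-\;X^{s+1}\bigl[s\psi(X)\tilde g(X)+\psi'(X)g(X)\bigr]u_i.
\]
The right-hand side equals $X^s\psi(X)u_{i+1}$ modulo $F_{s+1}U$, and since $u_p=0$, iterating $p$ times gives $(A-(\lambda-s)I_n)^p\,F_sU\subset F_{s+1}U$.

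Now $X^s u=X^s u_0\in F_s U$, so telescoping through $s,s+1,\dots,n-1$ produces
\[
\prod_{i=s}^{n-1}(A-(\lambda-i)I_n)^p\cdot X^s u\;\in\;F_n U\;=\;\{0\}.
\]
The factors $(T-(\lambda-i))^p$ are pairwise coprime in $i$, hence the kernel of this product decomposes as $\bigoplus_{i=s}^{n-1}\ker(A-(\lambda-i)I_n)^p=\bigoplus_{i=s}^{n-1}\ker(A+(i-\lambda)I_n)^p$, which is contained in the direct sum stated in the lemma. The case $s=1$ then gives the first assertion and general $s\ge 1$ the second.

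The only delicate point is the displayed identity in the second paragraph. The term $-s X^s\psi(X)g(X)u_i$ produced by the ``derivation'' $X q'(X)g(X)$ in Equation (\ref{rec}) must cancel exactly the contribution $s X^s\psi(X)u_i$ generated by the translate $A-(\lambda-s)I_n$, leaving behind the extra factor of $X$ that pushes the remainder into $F_{s+1}U$. That cancellation is what singles out $\lambda-s$ as the correct translate and is the sole non-routine computation in the argument.
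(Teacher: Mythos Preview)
Your argument is correct and follows the same underlying strategy as the paper: both show that the product $\prod_{i=s}^{n-1}(A+(i-\lambda)I_n)^p$ annihilates $X^s u$ by tracking how each shifted factor raises the $X$-valuation by one, until $X^n=0$ kicks in, and then invoke coprimality of the factors. The paper runs this as a direct induction, keeping track of expressions of the form $\sum_{r=0}^{p-1} P^{(r)}_{s+1}(X)\,A^r u$ where each $P^{(r)}_{s+1}$ is a polynomial of valuation at least $s+1$; your version packages the same bookkeeping more cleanly via the filtration $F_s U$ and the Jordan-type chain $(u_i)$, which makes the $p$-th power step transparent. One small point you leave implicit: to iterate your displayed identity modulo $F_{s+1}U$, you need $(A-(\lambda-s)I_n)\,F_{s+1}U\subset F_{s+1}U$, i.e.\ that each $F_t U$ is $A$-stable. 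This follows at once from the same identity $A\,q(X)v=q(X)Av-Xq'(X)g(X)v$ applied to generators of $F_{s+1}U$, but it is worth stating so that the phrase ``iterating $p$ times'' is fully justified.
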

 \begin{proof}
 We may assume $\lambda=0$.
 For any integers $r,s$, $P_s^{(r)}$ denotes a polynomial in the variable $X$ such that $val(P_s^{(r)})\geq{s}$. We prove the equalities
\begin{eqnarray}
a)\;\;\;\;\;\;\; \text{ for }k\leq{n},\;\;(A+kI_n)X^k&=&X^kA+P_{k+1}^{(0)}, \label{decomp}\\
b)\;\; \text{ for  }k<l\leq{n},\;\;(A+kI_n)X^l&=&X^lA+P_l^{(0)}, \nonumber\\
c) \text{ for  }k<l\leq{n},\;\;(A+kI_n)P_l^{(0)}&=&P_l^{(1)}A+P_l^{(2)}.\nonumber  
\end{eqnarray}
By Relation (\ref{rec}), $AX^k+kX^k=X^kA+kX^k(1-g(X))$. Since $g(0)=1$, this shows $a)$.
In the same way, the equality $AX^l+kX^l=X^lA+X^l(k-lg(X))$ shows $b)$. Finally, by linearity, we deduce $c)$ from $b)$.\\
$\bullet$ Using Equalities (\ref{decomp}), we easily see that $$(A+I_n)^pXu=(XA^p+\sum_{r=0}^{p-1}P_2^{(r)}A^r)u=\sum_{r=0}^{p-1}P_2^{(r)}A^ru.$$
By induction on $s$, we obtain for every $s\in\mathbb{N}^*$:
\begin{eqnarray*}
 (A+sI_n)^p\cdots(A+I_n)^pXu&=&(X^sA^pQ(A)+\sum_{r=0}^{p-1}P_{s+1}^{(r)}A^r)u\\
 &=&\sum_{r=0}^{p-1}P_{s+1}^{(r)}A^ru, \end{eqnarray*}
  where $Q$ is a polynomial. We choose $s=n-1$. Since for all $r\in\mathbb{N}$, $P_n^{(r)}=0$, this shows $i)$ above.\\
 $\bullet$  The proof of $ii)$ is by induction on $s$. Assume that $X^su=\sum_{i\geq{s}}u_i$, where for every $i$, $u_i\in{k}er(A+iI_n)^p$. Then $X^{s+1}u=\sum_{i\geq{s}}Xu_i$ where, by $i)$, $Xu_i\in\bigoplus_{j>i}\limits{k}er(A+jI_n)^p$.    
 \end{proof}
 \begin{lem} \label{lemnilp}
 Let $s$ be a positive integer such that $\lambda-s\notin\sigma(A)$. Then 
 $$X^su=0\;\; \text{and}\;\; Xu\in\bigoplus_{i=1}^{s-1}ker(A+(i-\lambda)I_n)^p.$$ 
 \end{lem}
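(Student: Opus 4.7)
My plan is to prove the two assertions of the lemma in sequence.

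For the first assertion $X^s u = 0$: the key step is to derive the operator identity $X^s A = (A + s g(X)) X^s$ by iterating relation (\ref{rec}), from which a short induction on $p$ gives
\[
X^s (A - \lambda I_n)^p \;=\; (A + s g(X) - \lambda I_n)^p X^s .
\]
Applying this to $u$ and using $(A - \lambda I_n)^p u = 0$ yields $(A + s g(X) - \lambda I_n)^p X^s u = 0$. By Theorem~\ref{ST} there is a common basis in which both $A$ and $X$ are upper triangular; since $g(0) = 1$ and $X$ is nilpotent, $g(X)$ is upper triangular with every diagonal entry equal to $1$. Hence $A + s g(X) - \lambda I_n$ is upper triangular with diagonal entries $\lambda_j + s - \lambda$ ($\lambda_j \in \sigma(A)$), and the hypothesis $\lambda - s \notin \sigma(A)$ makes every such entry nonzero. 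The operator is therefore invertible, which forces $X^s u = 0$.

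For the second assertion, I apply Lemma~\ref{lemdecomp} to write $Xu = \sum_{i=1}^{n-1} w_i$ with $w_i \in \ker(A - (\lambda-i) I_n)^p$; it suffices to show $w_i = 0$ for every $i \geq s$. Observe that $w_i = 0$ automatically whenever $\lambda - i \notin \sigma(A)$, so only the indices $i \geq s$ with $\lambda - i \in \sigma(A)$ are a priori problematic. Let $i_{\max}$ denote the largest such index. Applying Lemma~\ref{lemdecomp} to $w_{i_{\max}}$ (a generalized eigenvector of $A$ with eigenvalue $\lambda - i_{\max}$) gives $X w_{i_{\max}} \in \bigoplus_{l \geq 1} \ker(A - (\lambda - i_{\max} - l) I_n)^p$, and by maximality of $i_{\max}$ each of these kernels is trivial, so $X w_{i_{\max}} = 0$. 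I then proceed by descending induction on $i$ from $i_{\max}$ to $s$: combining the already-established $X^s u = 0$ (which yields $\sum_i X^{s-1} w_i = 0$) with Lemma~\ref{lemdecomp} applied to each $w_i$ separately to control in which generalized eigenspaces of $A$ the vectors $X^{s-1} w_i$ can have nontrivial components, the non-resonance condition $\lambda - s \notin \sigma(A)$ supplies the injectivity of the relevant restricted operators and forces $w_i = 0$ one step at a time.

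The main obstacle lies in this inductive step, namely upgrading identities of the form $X^k w_i = 0$ (for suitable $k$) to the desired $w_i = 0$. This step requires a careful bookkeeping of how the contributions from distinct generalized eigenspace components of $Xu$ interact under iterated application of $X$, and it is the non-resonance hypothesis that furnishes the crucial non-degeneracy---exactly paralleling the role it plays in the first part.
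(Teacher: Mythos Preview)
Your argument for the first assertion is correct and is in fact cleaner than the paper's. The intertwining identity $X^s(A-\lambda I_n)^p=(A+sg(X)-\lambda I_n)^pX^s$, together with the observation (via simultaneous triangularization and $g(0)=1$) that $A+sg(X)-\lambda I_n$ has spectrum $\{\mu+s-\lambda:\mu\in\sigma(A)\}$, gives the conclusion in one stroke. The paper instead argues by contradiction: it takes the largest $k\geq s$ with $X^ku\neq 0$, shows $X^ku\in\ker(A+(k-\lambda)I_n)^p$ using Lemma~\ref{invar} and the computations of Lemma~\ref{lemdecomp}, and simultaneously $X^ku\in\bigoplus_{i>k}\ker(A+(i-\lambda)I_n)^p$ by applying Lemma~\ref{lemdecomp} to $X^su$, forcing $X^ku=0$.

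For the second assertion, however, there is a genuine gap exactly at the point you label the ``main obstacle''. From $Xw_{i_{\max}}=0$, or more generally from any identity of the form $X^kw_i=0$, you cannot deduce $w_i=0$: the operator $X$ is nilpotent and hence never injective on a nonzero subspace, and no non-resonance hypothesis on $\sigma(A)$ changes this. The condition $\lambda-s\notin\sigma(A)$ makes operators like $A+sg(X)-\lambda I_n$ invertible, not powers of $X$; your descending induction, as described, never produces an invertible operator acting on a single $w_i$.

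The paper closes this part differently, by reusing the computation behind Lemma~\ref{lemdecomp}. Taking $\lambda=0$ for notational convenience, that computation gives
\[
\prod_{j=1}^{s-1}(A+jI_n)^p\,Xu\;=\;\sum_{r=0}^{p-1}P_s^{(r)}(X)\,A^ru,
\]
where each $P_s^{(r)}$ has valuation at least $s$ in $X$. Since $A^ru\in\ker A^p$ as well, the first assertion applied to $A^ru$ gives $X^sA^ru=0$, so the right-hand side vanishes. The operator $\prod_{j=1}^{s-1}(A+jI_n)^p$ is invertible on each summand $\ker(A+iI_n)^p$ with $i\geq s$, so all the components $w_i$ for $i\geq s$ vanish simultaneously, with no induction on $i$ needed. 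This is the idea your sketch is missing.
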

 \begin{proof}
 We may assume $\lambda=0$.\\ 
 $\bullet$ Suppose $X^su\not=0$. Let $k\geq{s}$ be the integer such that $u\in{k}er(X^{k+1})\backslash{k}er(X^k)$. 
Then $(A+kI_n)^pX^ku=\sum_{r=0}^{p-1}P_{k+1}^{(r)}A^ru$ and, by Lemma \ref{invar}, 
\begin{equation} 
X^ku\in{k}er(A+kI_n)^p. \label{ker1}
\end{equation}
 On the other hand, by Lemma \ref{lemdecomp}, $X^su\in\bigoplus_{i\geq{s}}\limits{k}er(A+iI_n)^p=\bigoplus_{i>s}\limits{k}er(A+iI_n)^p$. In the same way, \begin{equation}
 X^ku=X^{k-s}X^su\in\bigoplus_{i>k}\limits{k}er(A+iI_n)^p. \label{ker2}
 \end{equation}
By Relations (\ref{ker1}) and (\ref{ker2}), we deduce that $X^ku=0$, which is a contradiction.\\
 $\bullet$ Since $X^su=0$, one has   $(A+(s-1)I_n)^p\cdots(A+I_n)^pXu=\sum_{r=0}^{p-1}P_{s}^{(r)}A^ru=\sum_{r=0}^{p-1}\phi_{s}^{(r)}X^sA^ru=0$ where each $\phi_{s}^{(r)}$ is a polynomial in $X$. 
 \end{proof}
 \textbf{Notation}. 
 Let $A\in\mathcal{M}_n(\mathbb{C})$. We can write $\sigma(A)=\bigsqcup_{r=1}^k\limits{B}_r$ where the sequence $(B_r)_{1\leq{r}\leq{k}}$ satisfies the following:\\
 $i)$ for every $1\leq{r}\leq{k}$, there exists $\lambda_r\in\mathbb{C},c_r\in\mathbb{N}$ such that $B_r=\llbracket{0},c_r\rrbracket+\lambda_r$.\\
 $ii)$ If $r\not=s$ and $u\in{B}_r,v\in{B}_s$, then $u-v\not=1$.\\
  
 We consider the ordering of the eigenvalues of $A$ induced by the sequence\\ $(B_r)_{1\leq{r}\leq{k}}$ and the associated Jordan normal form of $A$: there exists an invertible matrix $P$ such that $P^{-1}AP=\bigoplus_{r=1}^k\limits{U}_r$ where for every $r$, $\sigma(U_r)=B_r$ and $U_r$ is a Jordan matrix.
 \begin{thm} \label{thmdecomp}
 Let $A\in\mathcal{M}_n(\mathbb{C})$ and $X$ be a nilpotent solution of Equation (\ref{nonzero}). With the previous notation, $P^{-1}XP=\bigoplus_{s=1}^k\limits{X}_s$, where for every $s$, $X_s$ is a nilpotent upper triangular matrix that satisfies $X_sU_s-U_sX_s=X_sg(X_s)$. 
 \end{thm}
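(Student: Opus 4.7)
The plan is to prove that each subspace
$$V_r \;=\; \bigoplus_{\mu \in B_r} \ker(A-\mu I_n)^n$$
is $X$-invariant, and then to read off the block decomposition from this fact. Because $\mathbb{C}^n = \bigoplus_r V_r$ (the generalized eigenspace decomposition), the existence of a $P$ with the required properties is then essentially a bookkeeping exercise.

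The central step is to combine Lemma \ref{lemnilp} with the defining property (2) of the partition $\{B_r\}$. Fix $r$, write $\lambda = \lambda_r + j \in B_r$ with $0 \le j \le c_r$, and pick $u \in \ker(A-\lambda I_n)^n$. Property (2) forces $\lambda_r - 1 \notin \sigma(A)$, so the smallest positive integer $s$ for which $\lambda - s \notin \sigma(A)$ is exactly $s = j+1$. Lemma \ref{lemnilp} then gives
$$Xu \;\in\; \bigoplus_{i=1}^{j} \ker\bigl(A + (i-\lambda)I_n\bigr)^{n} \;=\; \bigoplus_{i=1}^{j} \ker\bigl(A - (\lambda_r + j - i)I_n\bigr)^{n},$$
and since $\lambda_r + j - i \in B_r$ for every $1 \le i \le j$, we obtain $Xu \in V_r$. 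Extending by linearity, $X V_r \subseteq V_r$.

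Next, I would choose $P$ so that its columns form a Jordan basis of $A$ adapted to the decomposition $\mathbb{C}^n = \bigoplus_r V_r$, listing eigenvalues within each $V_r$ in increasing order $\lambda_r, \lambda_r+1, \dots, \lambda_r + c_r$. Then $P^{-1}AP = \bigoplus_r U_r$ with each $U_r$ a standard upper-triangular Jordan matrix of spectrum $B_r$, and the $X$-invariance of the $V_r$ forces $P^{-1}XP = \bigoplus_s X_s$ to be block-diagonal. Nilpotency of each $X_s$ is inherited from $X$, and restriction of $XA-AX = Xg(X)$ to $V_s$ yields $X_s U_s - U_s X_s = X_s g(X_s)$, because $g$ is a polynomial and $V_s$ is $X$-stable, hence $g(X)|_{V_s} = g(X_s)$.

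Finally, the upper triangularity of each $X_s$ falls out of the same argument: in the chosen ordering, basis vectors in $\ker(A-\lambda I_n)^n$ with smaller $\lambda \in B_s$ carry smaller indices in the $V_s$-block, and the inclusion derived above shows that $X$ sends any vector in $\ker(A-\lambda I_n)^n$ into the span of basis vectors indexed strictly earlier (those corresponding to $\lambda - 1, \dots, \lambda_s$). This makes $X_s$ strictly upper triangular. I do not anticipate a genuine obstacle: the whole proof is a transparent repackaging of Lemma \ref{lemnilp}, with the only delicate point being to notice that property (2) in the definition of the $B_r$ was designed precisely so that the integer $s$ produced by Lemma \ref{lemnilp} confines $X$ to descend only within the same block $B_r$.
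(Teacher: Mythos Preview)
Your proof is correct and is essentially the paper's own argument, spelled out in full: the paper's proof is the single sentence ``This follows from Lemma~\ref{lemdecomp} and Lemma~\ref{lemnilp}'', and what you have written is precisely the unpacking of that sentence. Your key observation---that property~(2) of the partition forces $\lambda_r-1\notin\sigma(A)$, so that Lemma~\ref{lemnilp} applied with $s=j+1$ confines $Xu$ to generalized eigenspaces with eigenvalues in $B_r$ strictly below $\lambda$---is exactly the intended mechanism, and the upper-triangularity of each $X_s$ falls out of it just as you say. The only cosmetic difference is that you invoke Lemma~\ref{lemnilp} alone (which already subsumes the needed part of Lemma~\ref{lemdecomp}), whereas the paper cites both.
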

 \begin{proof}
 This follows from Lemma \ref{lemdecomp} and Lemma \ref{lemnilp}. 
 \end{proof}
   \subsection{The complete solution.}
  \begin{rem}
  According to Theorem \ref{thmdecomp}, we have reduced the problem to solving Equation (\ref{nonzero}) to the case $A=U_s$.
  \end{rem}
 In the following, we show that if $A=U_s$ then it remains to solve a sequence of Sylvester equations (cf. Equation~(\ref{sylv}) and Property~(\ref{sylves})). Moreover if $U_s$ is diagonalizable, then we obtain an explicit solution that is computable by iteration.\\ 
  First we consider the general case and we may assume that $\sigma(A)=\llbracket0,k-1\rrbracket$. 
 \begin{prop}   \label{equagen}
 Let $A=diag(n_1,I_{s_2}+n_2,\cdots,(k-1)I_{s_k}+n_k)$ where each $n_i$ is a nilpotent matrix of dimension $s_i$ and where $\sum_{i=1}^ks_i=n$. Let $\alpha_2,\cdots,\alpha_{n-1}$ be complex numbers. 
The general nilpotent solution of the equation
  \begin{equation}   XA-AX=X+\sum_{i=2}^{n-1}\alpha_i{X}^i    \label{decompdiag}  \end{equation} 
 is a strictly upper triangular $(k\times{k})$ block matrix $X$. Let $1\leq{i}\leq{k-1}$. Knowing the false block diagonals of $X$ with indices $j<i$, the false block diagonal of $X$ with index $i$ can be obtained by the resolution of $k-i$ similar Sylvester equations.
\end{prop}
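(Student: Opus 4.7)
The plan is to exploit the block decomposition of $\mathbb{C}^n$ into the generalized eigenspaces of $A$ and to reduce Equation~(\ref{decompdiag}) to a triangular cascade of Sylvester equations acting on the blocks of $X$.

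First I would apply Lemma~\ref{lemdecomp} in the block decomposition of $A$: for $u$ in the $j$-th diagonal block (the generalized eigenspace of $A$ associated with the eigenvalue $j-1$), the lemma gives $Xu \in \bigoplus_{i=1}^{n-1} ker(A+(i-(j-1))I_n)^p$. Only the summands with $j-1-i \in \sigma(A) = \llbracket 0,k-1\rrbracket$ survive, and these correspond precisely to blocks $1,\dots,j-1$. Hence, writing $X = [X_{ij}]_{1 \le i,j \le k}$ with $X_{ij} \in \mathcal{M}_{s_i,s_j}(\mathbb{C})$, the matrix $X$ is strictly upper block triangular.

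Next I would compute the $(i,j)$ block of each side of Equation~(\ref{decompdiag}). Since $A_{jj} = (j-1)I_{s_j} + n_j$, a direct computation gives
$$ (XA-AX)_{ij} = (j-i)X_{ij} + X_{ij}\,n_j - n_i\,X_{ij}. $$
Set $d = j-i$. For $m \ge 2$ the block $(X^m)_{ij}$ is a sum of products $X_{l_0 l_1}\cdots X_{l_{m-1} l_m}$ along strictly increasing chains $i = l_0 < l_1 < \cdots < l_m = j$, so each factor has false-block-diagonal index strictly less than $d$. The equation at block $(i,j)$ therefore reduces to
$$ X_{ij}\bigl(n_j + (d-1)I_{s_j}\bigr) - n_i\,X_{ij} = D_{ij}, $$
where $D_{ij}$ is a polynomial expression in the blocks of false-block-diagonal indices strictly less than $d$ (in particular, $D_{ij} = 0$ when $d = 1$).

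Finally, this equation has the Sylvester form~(\ref{sylv}) with $B = n_j + (d-1)I_{s_j}$ and $C = n_i$; by Property~(\ref{sylves}), $\sigma(\phi) = \{d-1\}$. For $d \ge 2$, the operator $\phi$ is invertible and $X_{ij}$ is uniquely determined by the previously computed blocks; for $d = 1$ one is left with the homogeneous intertwining $X_{i,i+1}\,n_{i+1} = n_i\,X_{i,i+1}$, which carries the free parameters of the general solution. Since the false block diagonal of index $i$ contains exactly $k-i$ blocks, this stage of the algorithm consists in resolving $k-i$ such Sylvester equations, and the $k-1$ successive stages determine the whole matrix $X$. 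The main obstacle I expect is the careful bookkeeping showing that $D_{ij}$ truly depends only on blocks of false-block-diagonal index strictly less than $d$; once this is established, the conclusion is an immediate consequence of the classical theory of the Sylvester equation recalled in~(\ref{sylv})--(\ref{sylves}).
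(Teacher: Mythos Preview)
Your argument is correct and follows essentially the same route as the paper: establish strict upper block triangularity from the spectral decomposition of $A$, then identify block $(i,j)$ of Equation~(\ref{decompdiag}) to obtain the Sylvester equation $X_{ij}\bigl((d-1)I_{s_j}+n_j\bigr)-n_iX_{ij}=D_{ij}$ with $D_{ij}$ depending only on lower false diagonals. The only cosmetic difference is that the paper invokes Lemma~\ref{lemnilp} (which also yields $X^k=0$ directly) rather than Lemma~\ref{lemdecomp}; either works, since strict block triangularity already forces $X^k=0$.
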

\begin{proof}
By Lemma \ref{lemnilp}, $X=[x_{ij}]$ is a strictly upper triangular $(k\times{k})$ block matrix and satisfies $X^k=0$. By identification, the false diagonal of $X$ with index $1$ satisfies 
$$\text{for every }j\in\llbracket1,k-1\rrbracket,\; x_{j,j+1}n_{j+1}-n_jx_{j,j+1}=0.$$
Hence, $x_{j,j+1}$ is any element of $ker(\phi)$, where $\phi$ is the nilpotent Sylvester operator $x\rightarrow{x}n_{j+1}-n_jx$.
 Now the false diagonal of $X$ with index $i\in\llbracket 2,k-1\rrbracket$ satisfies 
$$\text{for every }j\in\llbracket1,k-i\rrbracket, \;x_{j,j+i}((i-1)I_{s_{j+i}}+n_{j+i})-n_jx_{j,j+i}=\psi_{ij}$$
 where $\psi_{ij}$ depends on $(\alpha_k)_{2\leq{k}\leq{i}}$, and on the false diagonals of $X$ with indices in $\llbracket 1,i-1\rrbracket$. These Sylvester equations are in the form $\phi(x_{j,j+i})=\psi_{ij}$ with $\phi=(i-1)Id+\nu$ where $\nu$ is a nilpotent operator. Thus $x_{j,j+i}=(\dfrac{1}{i-1}Id-\dfrac{1}{(i-1)^2}\nu+\dfrac{1}{(i-1)^3}\nu^2-\cdots)\psi_{ij}$.  
\end{proof} 
In the case where $A$ is diagonalizable, one has the following result
\begin{prop}   \label{equadiag}
 Let $A=diag(0_{s_1},I_{s_2}\cdots,(k-1)I_{s_k})$ with $s_1+\cdots+s_k=n$ and $\alpha_2,\cdots,\alpha_{n-1}$ be complex numbers. 
The general nilpotent solution of Equation (\ref{decompdiag})
 is a strictly upper triangular $(k\times{k})$ block matrix $X=[x_{ij}]$ such that each $x_{i,i+1}$ is an arbitrary $(s_i\times{s}_{i+1})$ matrix and, for every $r>1$, $$x_{i,i+r}=P_r(\alpha_2,\cdots,\alpha_r)\,x_{i,i+1}\,x_{i+1,i+2}\,\cdots\,{x}_{i+r-1,i+r}$$ 
 where $P_r$ is a polynomial in $\alpha_2,\cdots,\alpha_r$, with coefficients in $\mathbb{Q}$, that depends only on $r$.
 \end{prop}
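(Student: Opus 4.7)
The plan is to leverage Proposition \ref{equagen} together with the fact that every $n_j$ is now $0$, which collapses each Sylvester equation to a scalar multiple. From that proposition we already know that $X^k = 0$ and that $X = [X_{ij}]_{1 \leq i,j \leq k}$ is strictly upper triangular as a $(k \times k)$ block matrix. First I would compute the block equation at position $(i, i+r)$ directly: since $A = \mathrm{diag}((j-1)I_{s_j})_j$, one has $(AX)_{i,i+r} = (i-1) X_{i,i+r}$ and $(XA)_{i,i+r} = (i+r-1) X_{i,i+r}$, so (\ref{decompdiag}) yields
$$(r-1)\, X_{i, i+r} \;=\; \sum_{m=2}^{r} \alpha_m \, (X^m)_{i, i+r},$$
where the upper limit is $r$ because any product of strictly block upper triangular matrices spanning only $r$ columns cannot contain more than $r$ factors.

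Next I would read off the base case. For $r = 1$ the equation reduces to $0 = 0$, so $X_{i, i+1}$ is an arbitrary $s_i \times s_{i+1}$ matrix and $P_1 = 1$. For $r \geq 2$, dividing by $r - 1$ reduces the task to showing inductively that $(X^m)_{i, i+r}$ factors as a polynomial in $\alpha_2, \ldots, \alpha_{r-1}$ with rational coefficients (depending only on $r$ and $m$) times the ``pure'' superdiagonal product $Y_{i,r} := X_{i, i+1} X_{i+1, i+2} \cdots X_{i+r-1, i+r}$.

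The induction on $r$ then proceeds by expanding
$$(X^m)_{i, i+r} \;=\; \sum_{i = i_0 < i_1 < \cdots < i_m = i+r} X_{i_0, i_1} X_{i_1, i_2} \cdots X_{i_{m-1}, i_m}.$$
For $m \geq 2$, each jump satisfies $i_s - i_{s-1} \leq r - (m-1) \leq r - 1$, so the induction hypothesis applies to every factor: $X_{i_{s-1}, i_s}$ equals a rational polynomial in $\alpha_2, \ldots, \alpha_{i_s - i_{s-1}}$ times the pure superdiagonal product $X_{i_{s-1}, i_{s-1}+1} \cdots X_{i_s - 1, i_s}$. Concatenating these products across $s = 1, \ldots, m$ reconstitutes exactly $Y_{i, r}$, regardless of the chain, while the accompanying scalar factors multiply into a polynomial in $\alpha_2, \ldots, \alpha_{r-1}$ with rational coefficients. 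Summing over all chains and over $m \in \{2, \ldots, r\}$, then dividing by $r - 1$, produces $X_{i, i+r} = P_r(\alpha_2, \ldots, \alpha_r)\, Y_{i, r}$ with $P_r \in \mathbb{Q}[\alpha_2, \ldots, \alpha_r]$ independent of $i$.

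The main obstacle is the bookkeeping inside the inductive step: one must verify that the concatenation of pure superdiagonal products along any strictly increasing chain $i_0 < \cdots < i_m$ telescopes to the \emph{same} matrix $Y_{i, r}$, and that the scalar factors delivered by the induction hypothesis, summed over all such chains of length $m$, assemble into a single polynomial depending on $r$ and $m$ alone (not on $i$, the $s_j$'s, or the particular chain). Once this is cleanly laid out, the recursion furnishes an explicit formula for $P_r$ as a weighted sum over compositions of $r$ into parts $\geq 1$.
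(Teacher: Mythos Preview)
Your argument is correct and follows essentially the same route as the paper: derive the block identity $(r-1)\,x_{i,i+r}=\sum_{s=2}^{r}\alpha_s\sum_{i<i_1<\cdots<i_{s-1}<i+r}x_{i,i_1}\cdots x_{i_{s-1},i+r}$, read off that the superdiagonal blocks are free, and run an induction on $r$ replacing each factor by the hypothesis to collapse every chain onto the single product $x_{i,i+1}\cdots x_{i+r-1,i+r}$. The paper invokes Lemma~\ref{lemnilp} directly (rather than going through Proposition~\ref{equagen}) and leaves the inductive bookkeeping implicit, but the substance is the same.
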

 \begin{proof}
 By Lemma \ref{lemnilp}, $X$ satisfies $X^k=0$ and is a strictly upper triangular $(k\times{k})$ block matrix $X=[x_{ij}]$. Let $i\in\llbracket1,k-1\rrbracket$. We obtain, by identification,
$$\text{ for every }r\in\llbracket1,k-i\rrbracket, (r-1)x_{i,i+r}=\sum_{s=2}^r\alpha_s\sum_{i<i_1<\cdots<i_{s-1}<i+r}x_{i,i_1}x_{i_1,i_2}\cdots{x}_{i_{s-1},i+r}.$$ 
Thus the matrices $(x_{i,i+1})_{1\leq{i}\leq{k-1}}$ are arbitrary. Moreover $x_{i,i+2}=\\\alpha_2\,(x_{i,i+1}x_{i+1,i+2})$ or $P_2(\alpha_2)=\alpha_2$. Obviously $x_{i,i+r}$ is expressed as a function of $(x_{j,j+u})_{1\leq{j}<k,1\leq{u}<r}$. Then, by induction on the index $r$ of the false diagonal of $X$, it is easy to show the required formula for $x_{i,i+r}$.
\end{proof}
\begin{exam} \label{example}
 For instance, one has $$P_3(\alpha_2,\alpha_3)=\alpha_2^2+\dfrac{\alpha_3}{2},P_4(\alpha_2,\alpha_3,\alpha_4)=\alpha_2^3+\dfrac{4}{3}\alpha_2\alpha_3+\dfrac{\alpha_4}{3},$$ $$P_5(\alpha_2,\cdots,\alpha_5)=\alpha_2^4+\dfrac{29}{12}\alpha_2^2\alpha_3+\dfrac{7}{6}\alpha_2\alpha_4+\dfrac{3}{8}\alpha_3^3+\dfrac{\alpha_5}{5},$$ $$P_6(\alpha_2,\cdots,\alpha_6)=\alpha_2^5+\dfrac{37}{10}\alpha_2^3\alpha_3+\dfrac{13}{5}\alpha_2^2\alpha_4+\dfrac{8}{5}\alpha_3^2\alpha_2+\dfrac{3}{5}\alpha_3\alpha_4+\dfrac{11}{10}\alpha_2\alpha_5+\dfrac{\alpha_6}{5}.$$
Using Maple, one obtains $(P_{i})_{i\leq{1}3}$ in $1$ minute $50$ seconds.
\end{exam}
  \begin{rem} 
 The solution $X=0$ is always a cluster point of the set of the solutions of Equation (\ref{decompdiag}).
\end{rem}
\subsection{Application to the logarithm function.\\}
Let $A$ be a $(n\times{n})$ complex matrix.
\begin{defi}
Let $X$ be a $(n\times{n})$ matrix that has no eigenvalues on $\mathbb{R}^-=\{x\in\mathbb{R}\;|\;x\leq{0}\}$. The $X$-principal logarithm $\log(X)$ is the $(n\times{n})$ matrix $U$ such that $e^U=X$ and the eigenvalues of $U$ lie in the strip $\{z\in\mathbb{C}\;|\;\Im(z)\in(-\pi,\pi)\}$. 
\end{defi}
\begin{rem}
The function $X\rightarrow\log(X)$ is a primary matrix function.
\end{rem}
\begin{prop}    \label{log}
We consider the $(n\times{n})$ matrices $X$ that have no eigenvalues on $\mathbb{R}^-$ and that satisfy:
 \begin{equation} XA-AX=\log(X).    \label{equalog}  \end{equation}
  The general solution of Equation (\ref{equalog}) is $X=I_n+N$ where $N$ is a nilpotent matrix that satisfies $NA-AN=N+\sum_{i=2}^{n-1}\dfrac{(-1)^{i-1}}{i}N^i$. Moreover, if $A$ is given explicitely, then, using the proof of Proposition \ref{equagen}, we can calculate the solutions $N$. 
\end{prop}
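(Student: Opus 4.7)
The plan is to apply Theorem \ref{ST} to pin down the spectrum of $X$, translate the equation into one for the nilpotent part, and then exploit the fact that $\log(I_n+N)$ truncates to a polynomial whenever $N$ is nilpotent. Take $\Omega=\mathbb{C}\setminus\mathbb{R}^-$ and $f(z)=\log(z)$, so that $f(\alpha)=0$ has the unique solution $\alpha=1$ in $\Omega$, and $f'(1)=1\neq 0$. This puts us in the setting of Section 5.

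First I would invoke Theorem \ref{ST}: any solution $X$ of Equation (\ref{equalog}) satisfies $\sigma(X)\subset f^{-1}(0)=\{1\}$. Hence $X=I_n+N$ with $N$ a nilpotent matrix, and writing $XA-AX=NA-AN$ turns (\ref{equalog}) into $NA-AN=\log(I_n+N)$.

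Next I would compute $\log(I_n+N)$ as a primary matrix function. Since $N\in\mathcal{M}_n(\mathbb{C})$ is nilpotent, Cayley--Hamilton gives $N^n=0$. The Taylor expansion $\log(1+z)=\sum_{i\geq 1}\frac{(-1)^{i-1}}{i}z^i$ is valid in a neighborhood of $0$ that contains $\sigma(N)=\{0\}$, and by the definition of a primary matrix function (or equivalently by Proposition \ref{primary}(1), writing $f$ as a polynomial in $N$) the series collapses to the finite sum
\begin{equation*}
\log(I_n+N)=\sum_{i=1}^{n-1}\frac{(-1)^{i-1}}{i}N^i=N+\sum_{i=2}^{n-1}\frac{(-1)^{i-1}}{i}N^i.
\end{equation*}
Substituting this into $NA-AN=\log(I_n+N)$ gives exactly the asserted identity.

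For the converse, if $N$ is any nilpotent matrix with $NA-AN=N+\sum_{i=2}^{n-1}\frac{(-1)^{i-1}}{i}N^i$, then $X=I_n+N$ has spectrum $\{1\}$ (so in particular no eigenvalue on $\mathbb{R}^-$, and $\log(X)$ is well defined), and the same truncation identity shows $XA-AX=NA-AN=\log(X)$. The only delicate point is the truncation of $\log(I_n+N)$ to a polynomial of degree $<n$ in $N$; this is standard but worth stating explicitly, and it is the reason the resulting equation for $N$ falls into the framework of Equation (\ref{decompdiag}) already treated in Propositions \ref{equagen} and \ref{equadiag}.
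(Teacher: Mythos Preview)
Your proof is correct and follows the same approach as the paper: identify $f=\log$, $\alpha=1$, $f'(1)=1$, use Theorem~\ref{ST} to get $\sigma(X)=\{1\}$ and hence $X=I_n+N$ with $N$ nilpotent, then expand $\log(I_n+N)$ via the truncated Taylor series. The paper's own proof is extremely terse (essentially just recording $\alpha$ and $f'(\alpha)$ and invoking the framework already set up in Theorem~\ref{valpr} and Section~5), so your version simply makes explicit the steps the paper leaves implicit, including the converse direction.
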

\begin{proof}
We use Proposition \ref{equagen}. with $f(X)=\log(X)$, $\Omega=\mathbb{C}\setminus\mathbb{R}^-$, $\alpha=f^{-1}(0)=1$ and $f'(\alpha)=1$. Then $X=I_n+N$ where $N$ is a nilpotent matrix that satisfies $NA-AN=\log(I_n+N)=N-\dfrac{1}{2}N^2+\dfrac{1}{3}N^3-\cdots$.
 \end{proof}
 \begin{rem}
 If $A$ is diagonalizable, then the values of the polynomials cited in Example \ref{example} are: $P_2=\dfrac{-1}{2},P_3=\dfrac{5}{12},P_4=\dfrac{-31}{72},P_5=\dfrac{361}{720},P_6=\dfrac{-4537}{7200}$.
 \end{rem}
\section{The matrix equation $f(XA-AX)=X$\\}
Of course $XA-AX=\log(X)$ implies $e^{XA-AX}=X$. But is the converse true ? We consider the equations in the form: $f(XA-AX)=X$.
\subsection{The general case}
Let $f:\Omega\rightarrow{\mathbb{C}}$ be an analytic function and $\Omega$ be a complex domain containing $0$. We denote $\alpha=f(0)$. 
\begin{prop}   \label{invfonct}
Let $X$ be a $(n\times{n})$ matrix such that $f(XA-AX)=X$. There exists a nilpotent matrix $N$ such that $X=\alpha{I}_n+N$, $NA-AN$ is nilpotent and 
$$\sum_{k=1}^{n-1}\dfrac{f^{(k)}(0)}{k!}(NA-AN)^k=N.$$
Moreover, if $f'(0)\not=0$, then there exists a ball $\mathcal{B}$ containing $\alpha$ and an analytic function $g$ defined on $\mathcal{B}$ such that $g'(\alpha)\not=0$ and
$NA-AN=\sum_{k=1}^{n-1}\dfrac{g^{(k)}(\alpha)}{k!}N^k$. The derivatives $(g^{(k)}(\alpha))_{1\leq{k}\leq{n-1}}$ are explicitely computable quotients of the known derivatives $(f^{(k)}(0))_{1\leq{k}\leq{n-1}}$.
\end{prop}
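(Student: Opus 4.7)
The plan is to reduce Equation~(\ref{equalog}) to the setting already handled in the paper by first showing that $B:=XA-AX$ must be nilpotent, after which everything follows from a truncated Taylor expansion.

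The starting observation is that $X=f(B)$, so by Proposition~\ref{primary}, $X$ is a polynomial in $B$. In particular, $X$ and $B$ commute: $XB=BX$. This is the crucial structural fact I will exploit. Writing $C=[A,X]=-B$, the commutation $[C,X]=0$ lets me run the classical Jacobson-type trace argument. For every $k\geq 1$,
\[
\operatorname{tr}(C^k)=\operatorname{tr}(C^{k-1}AX)-\operatorname{tr}(C^{k-1}XA)=\operatorname{tr}(C^{k-1}XA)-\operatorname{tr}(C^{k-1}XA)=0,
\]
where in the first term I used cyclicity together with $XC^{k-1}=C^{k-1}X$. By Newton's identities this forces $C$, and hence $B$, to be nilpotent. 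The main conceptual step is therefore recognizing that $f(B)=X$ automatically produces the double-commutator condition $[[A,X],X]=0$; after that, nilpotency of $B$ is a short trace computation.

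Once $B$ is known to be nilpotent, I apply Proposition~\ref{primary}(2) to deduce $\sigma(X)=f(\sigma(B))=\{f(0)\}=\{\alpha\}$, so $N:=X-\alpha I_n$ is nilpotent and $NA-AN=XA-AX=B$ is nilpotent. Because $B^n=0$, the contour integral defining $f(B)$ collapses to a finite Taylor polynomial of degree at most $n-1$, yielding
\[
X=f(B)=\alpha I_n+\sum_{k=1}^{n-1}\frac{f^{(k)}(0)}{k!}B^k,
\]
which rearranges to the stated identity $\sum_{k=1}^{n-1}\tfrac{f^{(k)}(0)}{k!}(NA-AN)^k=N$.

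For the second assertion, when $f'(0)\neq 0$ the holomorphic inverse function theorem yields an analytic $g$ on a ball $\mathcal{B}\ni\alpha$ with $g\circ f=\mathrm{id}$ near $0$, so $g(\alpha)=0$ and $g'(\alpha)=1/f'(0)\neq 0$. Since $\sigma(X)=\{\alpha\}\subset\mathcal{B}$, the primary-matrix-function composition gives $g(X)=g(f(B))=B$, and applying the same truncation argument (now $N^n=0$) yields
\[
NA-AN=B=g(X)=\sum_{k=1}^{n-1}\frac{g^{(k)}(\alpha)}{k!}N^k.
\]
Finally, the derivatives $g^{(k)}(\alpha)$ can be obtained explicitly in terms of $(f^{(k)}(0))_{1\le k\le n-1}$ by inverting the formal power series of $f$ at $0$ (i.e., Lagrange inversion up to order $n-1$), which gives rational expressions as claimed. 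The only nontrivial point is the initial trace calculation; the rest is spectral mapping together with the nilpotency-induced truncation of Taylor series.
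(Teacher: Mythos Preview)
Your proof is correct and follows essentially the same route as the paper's: both note that $X=f(B)$ is a polynomial in $B=XA-AX$, hence commutes with it, conclude that $B$ is nilpotent, apply spectral mapping to get $\sigma(X)=\{\alpha\}$, and then invoke the local analytic inverse $g$ of $f$ when $f'(0)\neq 0$. The only difference is expository---you spell out the Jacobson trace argument for the nilpotency of $B$, whereas the paper simply writes ``Thus, $XA-AX$ is a nilpotent matrix'' once commutation is established---so the two arguments are essentially identical.
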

\begin{proof}
The matrix $f(XA-AX)$ is a polynomial in $XA-AX$. Clearly, $X$ and $XA-AX$ commute. Thus, $XA-AX$ is a nilpotent matrix, $\sigma(X)=\{\alpha\}$ and there exists a nilpotent matrix $N$ such that $X=\alpha{I}_n+N$ and $f(NA-AN)=\alpha{I}_n+N$.\\
If $f'(0)\not=0$, then $f$ admits a local inverse $g$, an analytic function defined on a neighborhood of $\alpha$ and with values in a neighborhood of $0$. Consequently,
$$NA-AN=g(\alpha{I}_n+N).$$
The last assertion is trivial.
\end{proof}
Taking $f(z)=e^z$, we deduce the following result
\begin{cor}   \label{corexpo}
The equations $e^{XA-AX}=X$ and $XA-AX=\log(X)$ have the same solutions.
\end{cor}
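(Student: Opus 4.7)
The plan is to prove the two inclusions separately, using Proposition \ref{log} for one direction and Proposition \ref{invfonct} (with $f(z)=e^z$) for the other.

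For the implication $XA-AX=\log(X)\Rightarrow e^{XA-AX}=X$, I would simply exponentiate. Since $X$ has no eigenvalues on $\mathbb{R}^-$ (as required for $\log(X)$ to be defined as principal logarithm), we have $e^{\log(X)}=X$ by definition of the principal logarithm as a primary matrix function. Thus $e^{XA-AX}=e^{\log(X)}=X$, which is the easy direction.

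For the converse, suppose $e^{XA-AX}=X$ and take $f(z)=e^z$ in Proposition \ref{invfonct}. Here $\alpha=f(0)=1$ and $f'(0)=1\not=0$, so the local inverse of $f$ near $0$ is precisely the principal logarithm $g=\log$, defined on a neighborhood of $1$. The proposition provides a nilpotent matrix $N$ with $X=I_n+N$ and $NA-AN=\log(I_n+N)$. Since $\sigma(X)=\{1\}$, $X$ has no eigenvalues on $\mathbb{R}^-$, so $\log(X)$ is well-defined, and clearly $\log(I_n+N)=\log(X)$. Finally, $XA-AX=(I_n+N)A-A(I_n+N)=NA-AN=\log(X)$, completing the equivalence.

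The only subtle point is identifying the abstract analytic inverse $g$ of Proposition \ref{invfonct} with the principal branch of the logarithm appearing in the statement; but because $X=I_n+N$ has its spectrum concentrated at $1$, the value $g(X)$ is computed from the local power series of $g$ about $1$, and that series coincides with that of the principal logarithm. Everything else is routine substitution, so I do not anticipate a genuine obstacle.
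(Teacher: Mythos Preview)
Your proposal is correct and follows essentially the same route as the paper: the easy direction is the remark opening Section~6, and the converse is precisely the specialization of Proposition~\ref{invfonct} to $f(z)=e^z$, with $g=\log$ as the local inverse. Your added observation that $\sigma(X)=\{1\}$ guarantees the principal logarithm is defined, and that the abstract inverse $g$ agrees with it on a neighborhood of $1$, simply makes explicit what the paper leaves implicit.
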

\begin{rem}
Since the nilpotent matrix $N$ commutes with $NA-AN$, the matrices $NA$ and $AN$ are nilpotent. This result is due to Kostant \cite{3}.
\end{rem}
\subsection{The case $f'(0)=0$} We study the equation  
$$f(XA-AX)=X$$ when $f'(0)=0$. We see that, in this case, the properties of the solutions of the previous equation are  
 very different from the case $f'(0)\not={0}$ studied in Proposition \ref{invfonct}.
 \begin{prop}    \label{dim3}
 Let $A$ be a $(3\times{3})$ matrix such that $A$ has three pairwise distinct eigenvalues.
 The equation 
  \begin{equation}  (XA-AX)^2=X   \label{equadim3}  \end{equation} 
  admits non-zero solutions in $\mathcal{M}_3(\mathbb{C})$.
  \end{prop}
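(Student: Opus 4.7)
Since $A$ has three pairwise distinct eigenvalues, $A$ is diagonalizable; conjugating both sides of $(XA-AX)^2 = X$ by the change-of-basis matrix, I may assume without loss of generality that $A = \mathrm{diag}(\lambda_1,\lambda_2,\lambda_3)$. Write $\mu_{ij} = \lambda_j - \lambda_i$ and set $\alpha = \mu_{12}$, $\beta = \mu_{23}$, $\gamma = \mu_{13} = \alpha+\beta$, all nonzero. For diagonal $A$ the entries of $Y := XA - AX$ are $y_{ij} = \mu_{ij}\,x_{ij}$, so the matrix equation becomes the scalar system
$$x_{ij} = \sum_{k} \mu_{ik}\mu_{kj}\, x_{ik}x_{kj}\qquad (1\leq i,j\leq 3).$$
For $i\neq j$ only the term with $k$ the third index survives, producing six off-diagonal equations that couple the six off-diagonals in pairs through the third index; the three $i=j$ equations then simply express each diagonal entry as an explicit quadratic form in the off-diagonal ones.

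My plan is to solve the six off-diagonal equations first by fixing two free parameters. Multiplying the equation for $x_{12}$ by the one for $x_{13}$ (and similarly for the two other symmetric pairs) yields three product conditions of the form $x_{ij}x_{ji} = c_{ij}(\alpha,\beta,\gamma)$; once I set $x_{12} = u$ and $x_{23} = v$ freely in $\mathbb{C}^*$, the remaining four off-diagonal entries are forced to be the explicit rational expressions
$$x_{13} = \alpha\beta\, uv,\quad x_{21} = -\tfrac{1}{\alpha^2\beta\gamma\, u},\quad x_{32} = -\tfrac{1}{\alpha\beta^2\gamma\, v},\quad x_{31} = \tfrac{1}{\alpha^2\beta^2\gamma^2\, uv}.$$
The three diagonal equations then compute to $x_{11} = -\tfrac{1}{\alpha\gamma}$, $x_{22} = \tfrac{1}{\alpha\beta}$, $x_{33} = -\tfrac{1}{\beta\gamma}$, which are nonzero.

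It remains to verify that this $X$ really is a solution and really is non-zero. Non-triviality is immediate from the explicit nonzero diagonal entries. For the matrix identity $(XA-AX)^2 = X$ one has to check the nine scalar equations; the six off-diagonal ones hold by the construction of $u,v$ and the product relations, and the three diagonal identities reduce, after simplification, to the single relation $\gamma = \alpha + \beta$, which holds by definition of the $\mu_{ij}$'s. Since this last simplification is the only nontrivial combinatorial step, I expect it to be the main obstacle to carrying out the proof cleanly; everything else is bookkeeping.

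As a sanity check, the $X$ produced is automatically nilpotent of square zero: a short trace computation gives $\mathrm{tr}(X) = 0$, and the product condition forces $\det Y = \alpha\beta\gamma(x_{13}x_{21}x_{32} - x_{12}x_{23}x_{31}) = 0$, so $Y^3 = 0$ and hence $X^2 = Y^4 = 0$, consistent with Proposition~\ref{invfonct}.
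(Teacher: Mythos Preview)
Your proof is correct and follows essentially the same approach as the paper: diagonalize $A$, write the equation $(XA-AX)^2=X$ in coordinates, and solve the resulting scalar system to obtain the same explicit two-parameter family of nonzero solutions (your $(u,v)$ are the paper's $(q,r)$). The only cosmetic differences are that the paper invokes Maple and uses $X^2=0$ as an input constraint whereas you solve by hand and recover $X^2=0$ as a sanity check; note also that your ``verification'' of the diagonal equations is vacuous, since those equations are precisely what you used to \emph{define} the $x_{ii}$.
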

  \begin{proof}
 Here $f(x)=x^2$ and $\alpha=0$. By Proposition \ref{invfonct}, $X$ is nilpotent. We may assume $A=diag(u,v,w)$. Let $X=[x_{ij}]$ be a solution of Equation (\ref{equadim3}). Obviously, $X^2=(XA-AX)^4=0$. From the relations $X^2=0\text{ and }(XA-AX)^2=X$, in a coordinatewise way, using Maple, one obtains all the non-zero solutions in the form:\\ $X=\begin{pmatrix}\dfrac{1}{(w-u)(u-v)}&q&qr(u-v)(v-w)\\\dfrac{1}{q(u-v)^2(v-w)(w-u)}&\dfrac{1}{(u-v)(v-w)}&r\\\dfrac{1}{qr(u-v)^2(v-w)^2(w-u)^2}&\dfrac{1}{r(v-w)^2(w-u)(u-v)}&\dfrac{1}{(v-w)(w-u)}\end{pmatrix}$\\
 where $q,r$ are non-zero arbitrary complex numbers.  
  \end{proof}
  \begin{rem}
 $i)$ The matrix $0$ is an isolated point of the set of the solutions of Equation (\ref{equadim3}).\\
 $ii)$ Let $X\not=0$. Since $X$ and $A$ have no common eigenvectors, they are not simultaneously triangularizable.\\
 $iii)$ The matrix $XA-AX$ is a square root of $X$. Hence, solving Equation (\ref{equadim3}) is equivalent to solving the equation $Y^2A-AY^2=Y$ with $X=Y^2$.
  \end{rem}
  \subsection{A mixed equation}
  Let $f,g$ be two analytic functions defined on $\Omega$ that vanish on $0$ only. We show that the equation $f(XA-AX)=g(X)$ may have non-nilpotent solutions.
  \begin{prop}
  Let $A$ be a $(2\times{2})$ matrix such that $A$ has two distinct eigenvalues. 
  The equation 
  \begin{equation}(XA-AX)^2=X^2\label{square}\end{equation}
  admits essentially non-nilpotent solutions in $\mathcal{M}_2(\mathbb{C})$.
   \end{prop}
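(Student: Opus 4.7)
The plan is to reduce to the diagonal case by simultaneous conjugation: the equation $(XA-AX)^2=X^2$ is invariant under $X\mapsto P^{-1}XP$ with $A\mapsto P^{-1}AP$, so I may assume $A=\mathrm{diag}(a,b)$ with $a\neq b$.

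With $X=[x_{ij}]$, one computes directly that $XA-AX$ has zero diagonal and off-diagonal entries $(b-a)x_{12}$ and $(a-b)x_{21}$. The crucial observation is that a $2\times 2$ matrix of this off-diagonal shape squares to a scalar matrix, namely $(XA-AX)^2=-(a-b)^2 x_{12}x_{21}\,I_2$. Equating with $X^2$ coordinatewise, the off-diagonal equations force $(x_{11}+x_{22})x_{12}=0=(x_{11}+x_{22})x_{21}$, while the diagonal equations yield $x_{11}^2=x_{22}^2=-(1+(a-b)^2)\,x_{12}x_{21}$.

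To produce non-nilpotent solutions, I would select the branch $x_{22}=-x_{11}$ (the alternative $x_{11}=x_{22}$ with $x_{12}=x_{21}=0$ collapses to $X=0$). For any $x_{11}\neq 0$ and any $x_{12}\neq 0$, solving $x_{12}x_{21}=-x_{11}^2/(1+(a-b)^2)$ for $x_{21}$ produces a matrix $X$ of trace zero and determinant $-x_{11}^2(a-b)^2/(1+(a-b)^2)\neq 0$. Hence $X$ has nonzero eigenvalues and is non-nilpotent, yielding a two-parameter family of genuinely non-nilpotent solutions. No serious obstacle arises: the argument is a direct $2\times 2$ computation, and the only conceptual input is that the off-diagonal shape of $XA-AX$ automatically makes $(XA-AX)^2$ scalar, thereby forcing $\operatorname{tr}(X)=0$ while leaving enough algebraic freedom to produce invertible (hence non-nilpotent) $X$.
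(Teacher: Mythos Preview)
Your approach is essentially the same as the paper's: reduce to $A=\mathrm{diag}(u,v)$, compute $(XA-AX)^2$ as a scalar matrix, deduce $\operatorname{tr}(X)=0$, and read off the single algebraic constraint on the entries of $X$. The paper phrases the constraint as $a^2+bc\bigl((u-v)^2+1\bigr)=0$ for $X=\begin{pmatrix}a&b\\c&-a\end{pmatrix}$ and observes that nilpotency is equivalent to $bc=0$, which is the same content as your computation.

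One small point: your explicit family, obtained by fixing $x_{11}\neq 0$ and $x_{12}\neq 0$ and solving $x_{12}x_{21}=-x_{11}^2/(1+(a-b)^2)$, silently assumes $1+(a-b)^2\neq 0$. When $a-b=\pm i$ the relation degenerates to $x_{11}=0$, and non-nilpotent solutions instead come from $X=\begin{pmatrix}0&b\\c&0\end{pmatrix}$ with $bc\neq 0$. The paper's formulation (the relation $a^2+bc((u-v)^2+1)=0$ together with $\det X=bc(u-v)^2$) handles both cases uniformly; you may want to add a sentence covering this edge case.
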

  \begin{proof}
  We may assume $A=diag(u,v)$. It is easy to show that $\text{Tr}(X)=0$. Hence $X$ is in the form $X=\begin{pmatrix}a&b\\c&-a\end{pmatrix}$, and satisfies the unique relation $a^2+bc((u-v)^2+1)=0$.
  The solution $X$ is nilpotent if and only the supplementary condition $bc=0$ is fulfilled.   
  \end{proof}
  \textbf{Acknowledgments.}
  The author wishes to thank the referee for helpful comments and D. Adam for many valuable discussions.

\bibliographystyle{plain}

\begin{thebibliography}{99}
\bibitem[1]{11} M. Andelic, \emph{On the matrix equation $XA-AX=\tau(X)$}, Applicable analysis and discrete mathematics \textbf{1} (2007), 257-264.
\bibitem[2]{4} W. Anderson, T. Morley, G. Trapp, \emph{Positive solutions to $X=A-BX^{-1}B^*$}, Linear algebra and its applications \textbf{134} (1990), 53-62.
\bibitem[3]{6} S. Bittanti, A. Laub, J. Willems, \emph{The Riccati Equation}, Springer-Verlag, (1991).
\bibitem[4]{8} G. Bourgeois, \emph{Algebraic systems of matrices and Gr\"obner basis theory}, Linear algebra and its applications \textbf{430} (2009), 2157-2169.
\bibitem[5]{1} D. Burde, \emph{On the matrix equation $XA-AX=X^p$}, Linear algebra and its applications \textbf{404} (2005), 147-165.
\bibitem[6]{10} M. Drazin, \emph{Exact rational solution of the matrix equation $A=p(X)$ by linearization},  Linear algebra and its applications \textbf{426} (2007), 502-515.
\bibitem[7]{2} N. Higham, \emph{Functions of matrices: theory and computation}, SIAM, Philadelphia (2008). 
\bibitem[8]{7} N. Higham, H.-M. Kim, \emph{Solving a quadratic matrix equation by Newton's method with exact line searches}, Siam J. Matrix Anal. Appl. \textbf{23.2} (2001), 303-316.
\bibitem[9]{3} B. Kostant, \emph{The principal three-dimensional subgroup and the Betti numbers of a complex simple Lie group},  Am. J. Math. \textbf{81} (1959), 973-1032.
\bibitem[10]{5} P. Lancaster,  L. Rodman, \emph{Algebraic Riccati equations}, Clarendon Press, Oxford, (1995).
\bibitem[11]{9} B. Zhou, G.-R. Duan, \emph{An explicit solution to the matrix equation $AX-XF=BY$}, Linear algebra and its applications \textbf{402} (2005), 345-366.
\end{thebibliography}

\end{document}